\newcommand{\TITLE}{Discrete Weierstrass-type representations}
\newcommand{\DATE}{\today}
  \theoremstyle{definition} 
  \newtheorem{defi}{Definition}[section]
	\crefname{defi}{definition}{definitions}
  \newtheorem{bem}[defi]{Remark}
	\crefname{bem}{remark}{remarks}
  \newtheorem{bsp}[defi]{Example}
	\newtheorem{examples}[defi]{Examples}
	\crefname{examples}{example}{examples}
\theoremstyle{plain} 
  \newtheorem{lem}[defi]{Lemma}
	\crefname{lem}{lemma}{lemmata}
  \newtheorem{satz}[defi]{Theorem}
	\crefname{satz}{theorem}{theorems}
	\crefname{cor}{corollary}{corollaries}
	\newtheorem{prop}[defi]{Proposition}
	\crefname{prop}{proposition}{propositions}
	\newenvironment{bspe}{\begin{examples}\leavevmode%
  \begin{enumerate}[1)]}{\end{enumerate}\end{examples}}
\newcommand{\Z}{\mathbb{Z}}
\newcommand{\R}{\mathbb{R}}
\newcommand{\C}{\mathbb{C}}
\renewcommand{\H}{\mathbb{H}}
\renewcommand{\S}{\mathbb{S}}
\newcommand{\spn}[1]{\left\langle#1\right\rangle} 
\newcommand{\Id}{\operatorname{Id}}
\renewcommand{\L}{\mathcal{L}} 
\newcommand{\Lines}{\mathcal{Z}_{\textrm{aff}}}
\renewcommand{\Re}{\operatorname{Re}}
\newcommand{\p}{\mathfrak{p}}
\renewcommand{\o}{\mathfrak{o}}
\renewcommand{\b}{\mathfrak{b}}
\renewcommand{\i}{\mathbbm{i}}
\begin{document}

\title{\TITLE}
\author{Mason Pember \and Denis Polly \and Masashi Yasumoto}

\subjclass[2020]{53A70 (primary); 51B15 (secondary)}

\date{\DATE}
\begin{abstract}
	Discrete Weierstrass-type representations yield a construction method in
discrete differential geometry for certain classes of discrete surfaces.
We show that the known discrete Weierstrass-type representations of
certain surface classes can be viewed as applications of the
$\Omega$-dual transform to lightlike Gauss maps in Laguerre geometry.
 From this construction, further Weierstrass-type representations arise.
As an application of the techniques we develop, we show that all
discrete linear Weingarten surfaces of Bryant or Bianchi type locally arise via
Weierstrass-type representations from discrete holomorhpic maps.
\end{abstract}

\maketitle

\section{Introduction}\label{sec:Intro}
Weierstrass-type representations play an important role in differential geometry. The classical representations, like the Weierstrass-Enneper representation for minimal surfaces in Euclidean space $\R^3$ \cite{weierstrass1866} or the representation of Bryant for constant mean curvature (cmc) 1 surfaces in hyperbolic space $\H^3$ \cite{bryant1987,umehara1992}, are convenient tools to generate interesting examples of these surface classes. Before the concept of discrete curvatures via mixed area was introduced in \cite{bobenko2010}, the existence of discrete analogues of these representations was used as justification for the definition of certain discrete surface classes. Examples of this include: 
\begin{itemize}
\item discrete isothermic and s-isothermic minimal surfaces in $\mathbb{R}^{3}$ \cite{bobenko2006,bobenko1996},
\item discrete maximal surfaces in $\mathbb{R}^{2,1}$ \cite[Thm 1.1]{yasumoto2015},
\item discrete cmc 1 surfaces in $\mathbb{H}^{3}$ \cite[Thm 4.4]{hertrich-jeromin2000}, 
\item discrete linear Weingarten surfaces of Bryant or Bianchi type (abbreviated as BrLW and BiLW, respectively) \cite{hoffmann2012,yasumoto2018}.
\end{itemize}

For all but the last example above, it has since been shown that the characterisation of these surface classes via representations coincides with their corresponding definitions via discrete curvatures (see for example \cite{bobenko2010}). Recently, this was shown for discrete flat fronts in $\mathbb{H}^{3}$, an example of a discrete BrLW surface, in \cite[Sec 3.2]{dubois2021}. 

In the smooth theory, the aforementioned surface classes belong to the class of $L$-isothermic surfaces. It is therefore natural to utilise Laguerre geometry to gain a unified geometric understanding of Weierstrass-type representations, which was done in \cite{pember2020}. To suit our goal of a unified geometric description of discrete Weierstrass-type representations\footnote{
In this paper, the term ``Weierstrass-type representation'' refers to representations that are built from discrete holomorphic functions. However, this term can also refer to other representations, such as in \cite{mueller2014} where a discrete analogue of the Kenmotsu representation is developed for discrete nonzero cmc surfaces in $\R^3$ using discrete harmonic maps.}, we seek a discrete analogue of this approach. We note that interpretations of certain discrete Weierstrass-type representations have been shown in conformal and Lie sphere geometry, such as in the works of \cite{bobenko2014,lam2020,hertrich-jeromin2021a}. 

In Section \ref{sec:Prem}, we give a brief overview of Laguerre geometry, describing the method of isotropy projection for relating points in Minkowski $4$-space $\mathbb{R}^{3,1}$ with spheres in $3$-dimensional subgeometries. Laguerre transformations can then be viewed as the affine transformations of $\mathbb{R}^{3,1}$. We give particular attention to the Laguerre transformations that are induced by hyperbolic rotations of $\mathbb{H}^{3}\subset \mathbb{R}^{3,1}$, and give a description of these in terms of $SL(2,\mathbb{C})$ matrices. 

In Section \ref{sec:Laguerre}, we describe how discrete surfaces are studied in Laguerre geometry, via discrete Legendre maps with discrete lightlike Gauss maps. We then give a discretisation of the light cone curvatures of surfaces in $\mathbb{R}^{3,1}$ developed in \cite{izumiya2009} and show how these curvatures relate to those of discrete surfaces in certain subgeometries of $\mathbb{R}^{3,1}$. We then recall the notions of discrete $\Omega$-surfaces and the $\Omega$-dual transformation developed in \cite{burstall2018}. This allows us to define the fundamental notion of discrete $L$-isothermic surfaces, being those discrete $\Omega$-surfaces that are $\Omega$-dual to their lightlike Gauss map. We then give a Laguerre geometric description of the Calapso transformation for this surface class. 

In Section \ref{sec:Weierstrass}, we apply the theory of discrete $L$-isothermic surfaces to show how various discrete Weierstrass-type representations can be seen as applications of the $\Omega$-dual transformation to a prescribed discrete lightlike Gauss map. This includes the aforementioned examples, but also gives rise to representations for new discrete surface classes such as intrinsically flat surfaces in the light cone. Moreover our results allow us to prove the existence of representations for discrete surface classes defined via discrete curvatures, in particular showing that all discrete BrLW and BiLW surfaces locally admit Weierstrass-type representations. We also show that the Calapso transformation descends to the Lawson correspondence in this setting and gives rise to a discrete analogue of the duality for cmc 1 surfaces in $\mathbb{H}^{3}$ developed in \cite{umehara1997}.

\section{Preliminaries}\label{sec:Prem}
We give a brief introduction to Laguerre geometry, the setting for our considerations: Denote by $\R^{3,1}$ the $4$-dimensional space with basis $(e_0, e_1, e_2, e_3)$, equipped with the Minkowski product $(,)$, given by
\begin{align}\label{eq:MinkProd}
	(e_0, e_0)=-1, (e_i, e_i) = 1,~i\in \{1,2,3\},~\textrm{and}~ (e_i,e_j)=0, ~\textrm{for}~i\neq j.
\end{align}
We call a vector $v\in \R^{3,1}$ \emph{spacelike}, \emph{timelike} or \emph{lightlike} if $(v,v)$ is positive, negative or equal to zero respectively.

The group of orthogonal transformations of $\R^{3,1}$, denoted $O(3,1)$, is a Lie group and its Lie algebra $\mathfrak{o}(3,1)$ is the algebra of endomorphisms that are skew-symmetric with respect to the Minkowski product. We identify $\Lambda^2 \R^{3,1}$ with $\mathfrak{o}(3,1)$ via
\begin{align*}
	(v_1\wedge v_2)w = (v_1, w)v_2 - (v_2, w)v_1.
\end{align*}
A basis of $\Lambda^2 \R^{3,1}$ (and thereby $\mathfrak{o}(3,1)$) is given by $\{e_{nm}\}_{n< m}$, with ${e_{nm}=e_n \wedge e_m}$.

In the Hermitian model of $\R^{3,1}$, points are identified with Hermitian $2\times 2$ matrices: Let $\i$ denote an imaginary unit. Then, the basis vectors of $\R^{3,1}$ become
\begin{align*}
	e_0 = \begin{pmatrix}
		1 &0 \cr 0 &1
	\end{pmatrix},~ e_1 = \begin{pmatrix}
		0 &1 \cr 1 &0
	\end{pmatrix},~ e_2 = \begin{pmatrix}
		0 &\i \cr -\i &0
	\end{pmatrix},~ e_3 = \begin{pmatrix}
		1 &0 \cr 0 &-1
	\end{pmatrix}
\end{align*}
and the inner product \eqref{eq:MinkProd} is polarization of $|v|^2= -\det v$. Each skew-symmetric endomorphism is identified with a matrix $A \in \mathfrak{sl}(2,\C)$ via
\begin{align*}
	A\cdot v = Av + vA^\ast,
\end{align*}
where $A^\ast = \overline{A}^T$. Thereby, the elements of the basis of $\mathfrak{o}(3,1)$ are identified as
\begin{align}\label{eq:HermitianSkew}
	e_{0n} = -\frac{1}{2}e_n,~ e_{12} = \frac{\i}{2}e_3,~e_{13} = -\frac{\i}{2}e_2,~e_{23} = \frac{\i}{2}e_1.
\end{align}
Furthermore, every matrix $B\in SL(2,\C)$ induces an orthogonal transformation via
\begin{align}\label{eq:SLAction}
	B \cdot v = BvB^\ast. 
\end{align}

In Laguerre geometry, isotropy projection is used to identify points in $\R^{3,1}$ with oriented spheres (see \cite[Sect 3.4]{cecil2008}). Two spheres $s_1, s_2$ are in oriented contact, if 
the difference vector $s_1 - s_2$ is lightlike. \emph{Laguerre transformations} are affine transformations of the form 
\begin{align*}
	x\mapsto cBx + b,
\end{align*}
where the \emph{linear part} $B \in O(3,1)$ is orthogonal (thus, Laguerre transformations preserve oriented contact), the \emph{translation part} $b\in\R^{3,1}$ is any vector  and $c$ is a non-zero number. We will assume that $c=1$ (thus restricting our attention to what Blaschke called the \emph{restricted Laguerre group} in \cite{blaschke1929}), because this smaller group already contains the transformation groups of the space form geometries we are interested in as subgroups (see Examples \ref{exp:LightlikeCurvature}).

A Laguerre transformation fixes orientation of spheres if its linear part is time-orientation preserving and therefore fixes
\begin{align*}
	\H^3 := \{v\in\R^{3,1}: (v,v)=-1,~v_0>0\},
\end{align*}
which is the model space of the Minkowski model of hyperbolic geometry. Thereby, isometries of $\H^3$ are the linear parts of Laguerre transformations that fix orientation of spheres. 

In the light cone model of M\"{o}bius geometry, points in the $2$-sphere $\S^2$ are represented by lines in the light cone 
\begin{align*}
	\mathcal{L} := \{v\in \R^{3,1}\colon (v,v)=0\}.
\end{align*} 
A M\"{o}bius transformation is obtained by the action of an orthogonal transformation on points in $\mathbb{P}\mathcal{L}$. Thereby we obtain a $1$-to-$1$ correspondence\footnote{For $B\in O(3,1)$, the same M\"{o}bius transformation is induced by $\pm B$, precisely one of which preserves time orientation.} between M\"{o}bius transformations of $\S^2$ and isometries of $\H^3$.

Let $B=\begin{pmatrix}
	a &b \cr c &d
\end{pmatrix}$ be an $SL(2,\C)$ matrix inducing a complex M\"{o}bius transformation
\begin{align*}
	z\mapsto \frac{az+b}{cz+d}.
\end{align*}
We aim to show that this is the M\"{o}bius transformation obtained by the action of $B$ on $\mathcal{L}$ in the Hermitian model\footnote{Note that complex M\"{o}bius transformations are even in the sense that each complex M\"{o}bius transformation is the product of an even number of reflections in a circle. Therefore, complex M\"{o}bius transformations in the light cone model correspond to $SO(3,1)$ transformations and induce isometries on $\H^3$}.

Via inverse stereographic projection, every $z\in \C$ is identified with a point in $\mathcal{L}$ by
\begin{align}\label{eq:StereoProj}
	\begin{split}
	z &\mapsto \spn{(|z|^2 +1)e_0 + (z + \overline{z})e_1 - \i(z-\overline{z})e_2 + (|z|^2-1)e_3} \\ 
	  &=\spn{\begin{pmatrix}
		|z|^2 &z \cr 
		\overline{z} &1
	\end{pmatrix}},
	\end{split}
\end{align}
where $\spn{\cdot}$ denotes the linear span of vectors. The action of $B$ on $\R^{3,1}$, $X\mapsto B\cdot X$ as defined in \eqref{eq:SLAction}, induces a M\"{o}bius transformation via
\begin{align*}
	B\cdot z = \spn{B\cdot \begin{pmatrix}
		|z|^2 &z \cr 
		\overline{z} &1
	\end{pmatrix}} = \spn{\begin{pmatrix}
		\left|\tfrac{az+b}{cz+d}\right|^2 &\tfrac{az+b}{cz+d} 
		\vspace{5pt}		\cr
		\overline{\left(\tfrac{az+b}{cz+d}\right)} &1
	\end{pmatrix}},
\end{align*}
which is indeed the action of $B$ on $\C$. 

\begin{bsp}\label{exp:Gamma}
	A hyperbolic rotation $\Gamma^{\spn{g_1}}_{\spn{g_2}}(t)$ in $\H^3$ is an orthogonal transformations for which there are two lightlike points $g_1, g_2$, such that
	\begin{align*}
		\Gamma^{\spn{g_1}}_{\spn{g_2}}(t) = \left\{ \begin{array}{ll}
			t \Id &\textrm{on}~\spn{g_1} \\
			\Id &\textrm{on}~\spn{g_1, g_2}^\perp \\ 
			t^{-1} \Id &\textrm{on}~\spn{g_2}.
		\end{array} \right.
	\end{align*}
	On any point $v \in \R^{3,1}$, we have
	\begin{align*}
		\Gamma^{\spn{g_1}}_{\spn{g_2}}(t)v = v + \frac{1-t}{(g_1, g_2)}\left(\frac{1}{t}(g_1, v)g_2 - (g_2, v)g_1\right).
	\end{align*}
	Assume, $q=\begin{pmatrix}
		1 &0 \cr 0 &0
	\end{pmatrix} \notin \spn{g_1, g_2}$. Then $g_1, g_2$ and $q$ correspond to three distinct points $z_1, z_2$ and $\infty$ in $\C\cup \{\infty\}$, respectively\footnote{Choosing $\infty$ may sound surprising, but in terms of a complex M\"{o}bius transformation, $\infty \mapsto a/c$.}. Since any complex M\"{o}bius transformation is uniquely defined by the images of three points, and 
	\begin{align*}
		\{z_1, z_2, \infty\} \mapsto \left\{z_1, z_2, \frac{z_2-tz_1}{1-t}\right\},
	\end{align*}
	we learn that $\Gamma^{\spn{g_1}}_{\spn{g_2}}(t)$ corresponds to the matrix $B\in SL(2,\C)$ with coefficients $a,b,c$ and $d$ given by its action
	\begin{align*}
		B:~z_1 \mapsto z_1, ~ z_2\mapsto z_2,~\infty \mapsto \frac{a}{c}.
	\end{align*}
	These three conditions amount to a solution matrix 
	\begin{align*}
		\tilde{B} = \begin{pmatrix}
			tz_1 -z_2 &z_1 z_2 (1-t) \cr t-1 &z_1-tz_2
		\end{pmatrix}.
	\end{align*}
	Computing the determinant of $\tilde{B}$, we learn that
	\begin{align*}
		B= \frac{1}{(z_1 - z_2)\sqrt{t}}\begin{pmatrix}
			tz_1 -z_2 &z_1 z_2 (1-t) \cr t-1 &z_1-tz_2
		\end{pmatrix},
	\end{align*}
	up to a choice of sign.
\end{bsp}

\section{Discrete Legendre immersions in Laguerre geometry}\label{sec:Laguerre}
A \emph{discrete surface} is a map $x: \Z^2 \to \R^{3,1}$. Since points in $\R^{3,1}$ represent spheres in Laguerre geometry, we also call $x$ a \emph{discrete sphere congruence}. We define 
\begin{align*}
	dx_{ij} = x_i - x_j,~\textrm{and}~ x_{ij} = \frac{x_i+x_j}{2}, 
\end{align*}
on edges and
\begin{align*}
	\delta x_{ik} = x_i - x_k,
\end{align*}
for diagonal vertices of an elementary quadrilateral $(ijkl)$ consisting of neighboring points in $\Z^2$. We call a discrete surface $x$ (cf. \cite[Def 4.6 and Rem 4.7]{burstall2020})
\begin{itemize}
	\item[-] \emph{planar}  if the vertices of each elementary quadrilateral are mapped to coplanar points in $\R^{3,1}$ and
	\item[-] \emph{circular} if it is planar and there exists $c\in \R^{3,1}$ for each elementary quadrilateral, such that
\begin{align*}
	(x-c, x-c) \equiv const.
\end{align*}
 Note that we do not ask that $c$ lies on the corresponding face of $x$. 
\end{itemize}

Let $\Lines$ denote the set of affine null-lines in $\R^{3,1}$. Each such line is spanned by two oriented spheres in contact and represents all oriented spheres in contact with the spanning spheres. Thus, every $\Lambda\in \Lines$ is identified with a contact element. In accordance with \cite{burstall2020} we pose the following definition.

\begin{defi}\label{def:LegendreNet}
	A map $L: \Z^2 \to \Lines$ is called a \emph{discrete Legendre immersion}, if for each edge $(ij)$, $L_i \cap L_j \neq \emptyset$. 
	
	A discrete sphere congruence $x:\Z^2 \to \R^{3,1}$ is \emph{enveloped by $L$}, if there is a map $G: \Z^2 \to \mathbb{P}\L$ into the projective light cone such that $L = x+G$. 
\end{defi}

The map $L$ may be interpreted as an affine null line bundle. The set of sections of $L$ is denoted by $\Gamma L$ and consists of maps $x:\Z^2 \to \R^{3,1}$ such that $x_i \in L_i$ for all $i \in \Z^2$. Thereby, $\Gamma L$ is easily seen to be the set of enveloped sphere congruences of $L$. The map $G:\Z^2 \to \mathbb{P}\L$ such that $L= x+ G$ is called the \emph{(discrete) lightlike Gau{ss} map of $L$}. 

We define the \emph{mixed area} of two edge-parallel discrete surfaces $x, x^\ast: \Z^2\to \R^{3,1}$ as a map on faces that takes values in $\wedge^2\R^{3,1}$ via
\begin{align*}
	A(x,x^\ast)_{ijkl} = \frac{1}{4}\left(\delta x_{ik}\wedge\delta x^\ast_{jl} - \delta x_{jl}\wedge \delta x^\ast_{ik}\right). 
\end{align*}
The mixed area is used to define the curvature of discrete surfaces with respect to given parallel surfaces, see \cite{bobenko2010}. For this definition to work we will pose the following assumption from now on: any discrete surface $x$ (for which we define discrete curvature) has non-parallel diagonals on each face, that is $A(x,x) \neq 0$.

To condense the curvature notions of certain subgeometries of Laguerre geometry, we introduce a curvature notion for discrete Legendre immersions; a discrete version of the light cone mean curvature given in \cite{honda2015}.

\begin{defi}\label{def:LightlikeCurvature}
	Let $x:\Z^2 \to \R^{3,1}$ be a discrete sphere congruence enveloped by the Legendre immersion ${L=x+G}$, with lightlike Gau{ss} map $G$ such that there exists a $g\in \Gamma G$ that is edge parallel to $x$. The \emph{light cone mean curvature $H_g$ of $x$ with respect to $g$} is then defined on faces via
	\begin{align*}
		A(x,g) + H_g A(x,x) = 0.
	\end{align*}
\end{defi}

\begin{bspe}\label{exp:LightlikeCurvature}
	\item Let $\p\in \R^{3,1}$ be a unit timelike vector. A circular discrete surface $f:\Z^2 \to \R^3$ may be interpreted as a circular discrete surface $x:\Z^2 \to \spn{\p}^\perp$. Since $f$ is circular, it allows for a parallel surface $n:\Z^2 \to S^2\subset \spn{\p}^\perp$ and the \emph{discrete mean curvature $H$} of $f$ with respect to its \emph{discrete Gauss map} $n$ is defined on faces via
	\begin{align}\label{eq:MeanCurvature}
		A(f,n) + H A(f,f)=0.
	\end{align}
	The net $g=\p + n$ takes values in the light cone $\L$, hence, $x$ is enveloped by $L=x+\spn{g}$ and $g$ is edge parallel to $x$. Since $\delta g = \delta n$, we have
	\begin{align*}
		H = H_g.
	\end{align*}
	The same construction of a lightlike Gau{ss} map works for discrete surfaces with timelike Gau{ss} maps in the Lorentz $3$-space $\R^{2,1}\cong \spn{\p}^\perp$ for spacelike $\p$.  We call a discrete surface in $\R^{3}$ ($\R^{2,1}$) with vanishing mean curvature \emph{minimal} (\emph{maximal}).
	\item\label{exp:lightlikeIso} For lightlike $\p$, fix $\o \in \L$ with $(\o,\p)=-1$ and $\b_1, \b_2$ as an orthonormal basis of $\spn{\o, \p}^\perp$. The subgroup of Laguerre transformations consisting of a vector in $\spn{\p}^\perp$ and an orthogonal matrix $A\in \operatorname{O}(3,1)$ with $A\p = \p$ fixes
	\begin{align*}
		I_{\p,\o} := \o + \spn{\p}^\perp,
	\end{align*}
	and acts on a point $y = \o + y_1\b_1 + y_2\b_2 + y_3\p +\in I_{\p,\o}$ as
	\begin{align*}
		y_1 &\mapsto a +y_1 \cos\phi -y_2 \sin\phi, \\
		y_2 &\mapsto b +y_1 \sin\phi +y_2 \cos\phi, \\
		y_3 &\mapsto c +c_1 y_1 +c_2 y_2 +y_3. 
	\end{align*}
	Thus, the isotropic geometry of \cite{sachs1990}, \cite{strubecker1942}, a discrete curvature theory for which has been developed in \cite{pottmann2007}, is a subgeometry of Laguerre geometry. 
	
	Let $f: \Z^2 \to I_{\p,\o}$ be a circular discrete surface, enveloped by a discrete Legendre immersion $L=f + G$ and let $g\in \Gamma G$ also take values in $I_{\p,\o}$, that is
	\begin{align*}
		g = \o + g_1\b_1 + g_2\b_2 + \frac{g_1^2+g_2^2}{2}\p.
	\end{align*}
	Since $L$ is a discrete Legendre immersion, we have $\{x_{ij} \in L_i \cap L_j\}_{ij}$ satisfying
	\begin{align*}
		x_{ij} = f_i + \lambda_ig_i = f_j + \lambda_jg_j,
	\end{align*}
	for suitable $\lambda_i, \lambda_j$. Because $f$ and $g$ take values in $I_{\p,\o}$ we have $(f,\o) = (g,\o) =-1$ and learn $\lambda_i = \lambda_j =: \lambda_{ij}$. This implies 
	\begin{align*}
		df_{ij} = -\lambda_{ij}dg_{ij},
	\end{align*}
	hence $f$ and $g$ are edge-parallel and we can define the light cone curvature $H_g$ for $f$. According to \cite[Sect 4]{pottmann2007}, in isotropic space $I_{\p,\o}$, the mean curvature of $f$ with respect to $g$ is defined by
	\begin{align*}
		A(f,g) + HA(f,f) = 0,
	\end{align*}
	which immediately implies $H=H_g$. We call a discrete surface \emph{i-minimal}, if its mean curvature vanishes.
	\item \label{exp:CMC} Let $x$ be a circular discrete surface taking values in a quadric 
		\begin{align*}
			Q_\mu:=\{x\in\R^{3,1}: (x,x)=\mu, ~\textrm{with}~\mu\neq 0\},
		\end{align*}
		which may be considered as 
		\begin{itemize}
			\item de Sitter space of curvature $\tfrac{1}{\sqrt{\mu}}$ for $\mu>0$, or
			\item two copies of a hyperbolic space form with curvature $-\tfrac{1}{\sqrt{-\mu}}$ for $\mu<0$.
		\end{itemize}
		For example, $Q_{-1}$ is two copies of hyperbolic space $\H^3$ and $Q_1$ is \emph{de Sitter space}, which we denote by $\S^{2,1}$. 
		
		The circular discrete surface $x$ allows for a $2$-parameter choice of \emph{discrete Gauss map} ${n:\Z^2 \to \R^{3,1}}$ with opposite causal character, so that $(n,x)=0$ and $L=x+\spn{x+\sqrt{|\mu|} n}$ is a discrete Legendre immersion. We call $G=\spn{x+\sqrt{|\mu|} n}$ the \emph{discrete hyperbolic Gauss map of $x$}. The \emph{discrete mean curvature $H_\mu$} of $x$ with respect to $n$ is then defined by equation \eqref{eq:MeanCurvature}. For later reference, we also define the \emph{discrete extrinsinc Gauss curvature $K$} on faces via
		\begin{align}\label{eq:GaussCurvature}
			A(n,n) - KA(f,f) = 0.
		\end{align}
		
		Because $x$ and $n$ are edge-parallel, $g=x+\sqrt{|\mu|} n$ and $x$ are edge-parallel as well and the light cone mean curvature of $x$ with respect to $g$ satisfies
		\begin{align*}
			H_g + 1 = \sqrt{|\mu|} H_\mu.
		\end{align*}
		
		Note that, conversely, for a discrete Legendre immersion $L=x+G$ enveloping a circular discrete surface $x$ in $Q_\mu$, a discrete hyperbolic Gauss map is obtained from any edge-parallel lift $g\in\Gamma G$:  
		\begin{align*}
			n = \tfrac{x}{\sqrt{|\mu|}} - \operatorname{sgn}\mu \tfrac{g}{(x, g)}.
		\end{align*}
		\item\label{exp:Lightcone} The (smooth) extrinsic geometry of spacelike surfaces in the light cone $\mathcal{L}$ has been developed in \cite{izumiya2009} (see also \cite{liu2007}), culminating in their \emph{Theorema Egregium} that a surface is intrinsically flat if and only of its light cone mean curvature vanishes, \cite[Thm 9.3]{izumiya2009}. Because of this remarkable fact (flatness is characterized by vanishing mean curvature), we call a discrete surface $x$ in the light cone with vanishing light cone mean curvature (with respect to some edge-parallel $g\in \Gamma G$) \emph{intrinsically flat (with respect to $g$)}\footnote{This terminology is solely motivated by the smooth result in \cite{izumiya2009}. Developement of a discrete curvature theory for discrete spacelike surfaces in the lightcone could be the subject of future research.}.
\end{bspe}

\subsection{Discrete L-isothermic surfaces}\label{sec:LIso}
In Laguerre geometry a \emph{discrete isothermic sphere congruence} is a discrete surface $x:\Z^2 \to \R^{3,1}$ that possesses a \emph{Christoffel dual} $x^\ast:\Z^2 \to \R^{3,1}$, which is characterised by (see \cite[Thm 4.11]{burstall2020})
\begin{itemize}
	\item $x$ and $x^\ast$ are edge parallel circular nets, and
	\item $A(x,x^\ast) =0$. 
\end{itemize}
Clearly, this relation is symmetric in $x$ and $x^\ast$, hence, $x^\ast$ is itself a discrete isothermic sphere congruence. 

Analogous to Demoulin's definition of $\Omega$-surfaces (\cite{demoulin1911}, \cite{demoulin1911a}), we give the following definition.

\begin{defi}[{\cite[Def 3.1]{burstall2018}}]\label{def:OmegaNet}
	A \emph{discrete $\Omega$-surface} is an envelope $L = x + G$ of a discrete isothermic sphere congruence.
\end{defi}

For a more general treatment of discrete $\Omega$-surfaces, see \cite{burstall2020}. 

\begin{defi}\label{def:OmegaTrafo}
	Two discrete $\Omega$-surfaces $L, L^\ast$ are called \emph{$\Omega$-dual} if there are Christoffel dual surfaces $x\in \Gamma L$ and $x^\ast \in \Gamma L^\ast$ and the discrete lightlike Gau{ss} maps coincide, i.e., $G^\ast = G$.  
\end{defi}

If $L=x+G$ is a discrete $\Omega$-surface, then we may take a Christoffel dual $x^\ast$ of $x$ and define $L^\ast = x^\ast + G$. $L^\ast$ is automatically an envelope of $x^\ast$ and, thus, an $\Omega$-dual of $L$. A special class of discrete $\Omega$-surfaces are discrete L-isothermic surfaces, which are characterised by being $\Omega$-dual to their lightlike Gau{ss} maps.

\begin{defi}\label{defi:LIsothermicNet}
	A \emph{discrete L-isothermic surface} is a discrete $\Omega$-surface $L$ that is $\Omega$-dual to its discrete lightlike Gau{ss} map $G$, that is, there exist $x\in \Gamma L$ and $g\in\Gamma G$ such that $x$ and $g$ are Christoffel dual.
\end{defi}

For a discrete L-isothermic surface, $x$ and $g$ are edge parallel with $A(x,g)=0$. Accordingly, the light cone mean curvature of $x$ with respect to $g$ vanishes on all quadrilaterals. 

\begin{bem}
	In \cite[Def 16]{bobenko2007}, the authors introduce a notion of L-isothermic discrete conical surfaces, that is, certain discrete maps into the space of hyperplanes in $\R^3$ (a discrete version of the smooth definition, where L-isothermic surfaces possess isothermic Gauss maps). These plane congruences allow for a $2$-parameter choice of an enveloped circular discrete surface  $f$ such that the resulting contact element map is a discrete L-isothermic surface. 
\end{bem}

Let $g$ be a planar discrete surface in the light cone. Then, $g$ is circular, as is any edge parallel $x$ (for details, see \cite{burstall2020}). Therefore, let $L=x + \spn{g}$ be a discrete Legendre immersion with $x$ and $g$ edge parallel and planar. Then we can define a $1$-form $\zeta$ by
\begin{align*}
	\zeta_{ij}:= g_{ij}\wedge dx_{ij} \in G_i\wedge G_j.
\end{align*}
The closedness of this $1$-form is equivalent to $A(x,g)=0$: A simple computation shows that
\begin{align*}
	\zeta_{ij}+\zeta_{jk}+\zeta_{kl}+\zeta_{li} = 2 A(x,g)_{ijkl}.
\end{align*}
We conclude: 

\begin{prop}\label{prop:closedForm}
	A discrete Legendre immersion $L$ with discrete lightlike Gau{ss} map $G$ is L-isothermic if and only if there exist discrete surfaces $g\in \Gamma G$ and $x\in\Gamma L$ such that 
	\begin{itemize}
		\item $x$ and $g$ are planar edge-parallel and
		\item the $1$-form $\zeta$, defined on edges via $\zeta_{ij}=g_{ij} \wedge dx_{ij}$, is closed. 
	\end{itemize}
\end{prop}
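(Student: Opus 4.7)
The proof plan is essentially a matter of unpacking the definitions in light of the mixed-area identity the authors have already flagged. The key observation, stated just above the proposition, is the algebraic identity
\begin{align*}
    \zeta_{ij}+\zeta_{jk}+\zeta_{kl}+\zeta_{li} = 2A(x,g)_{ijkl},
\end{align*}
which shows that closedness of $\zeta$ is equivalent to the vanishing of the mixed area $A(x,g)$ on every elementary quadrilateral. This will do the ``area'' half of both implications; the rest is matching up the planarity/circularity conditions with the definition of Christoffel dual.

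For the forward direction ($L$ L-isothermic $\Rightarrow$ planar, edge-parallel $x,g$ with closed $\zeta$), I would simply invoke \Cref{defi:LIsothermicNet} to produce $x \in \Gamma L$ and $g \in \Gamma G$ that are Christoffel dual. By definition of the Christoffel dual (from the beginning of \Cref{sec:LIso}), $x$ and $g$ are edge-parallel circular nets with $A(x,g)=0$. Circularity implies planarity, and vanishing mixed area combined with the identity above implies $\zeta$ is closed.

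For the converse, suppose $x\in\Gamma L$ and $g\in\Gamma G$ are planar, edge-parallel, with $\zeta$ closed. The identity immediately gives $A(x,g)=0$. What remains is circularity, which is not directly in the hypothesis. Here I would use the remark preceding the proposition: since $g$ takes values in the light cone $\mathcal{L}$ and is planar, $g$ is automatically circular, and any edge-parallel partner $x$ is circular as well. With edge-parallelism, circularity, and $A(x,g)=0$ in hand, $x$ and $g$ satisfy the definition of Christoffel dual, so $L$ is $\Omega$-dual to $G$ and therefore L-isothermic by \Cref{defi:LIsothermicNet}.

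The only point with any substance is the circularity in the converse, and that is handled entirely by citing the general fact about planar nets in the light cone from \cite{burstall2020} which the authors have already recalled. No new computation is required beyond the displayed mixed-area identity, so I expect the proof to be a short paragraph rather than a technical argument.
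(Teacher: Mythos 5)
Your proposal is correct and follows exactly the route the paper takes: the paper's ``proof'' is precisely the paragraph preceding the proposition, namely the identity $\zeta_{ij}+\zeta_{jk}+\zeta_{kl}+\zeta_{li}=2A(x,g)_{ijkl}$ together with the cited fact that planar nets in the light cone and their edge-parallel partners are circular, after which the equivalence with the Christoffel-dual conditions is immediate. Nothing is missing; your explicit split into the two implications only makes the paper's one-line conclusion more verbose.
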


\subsection{The Calapso transform}\label{sec:Calapso}
The Calapso transformation of discrete $\Omega$-surfaces has been given a Lie sphere geometric treatment in \cite[Section 3]{burstall2018}, culminating in \cite[Thm and Def 3.9]{burstall2018}. However, discrete L-isothermic surfaces can be considered in Laguerre geometry, viewed as a subgeometry of Lie geometry. For self-consistency, we give a Laguerre geometric treatment of the Calapso transformation of discrete L-isothermic surfacess which coincides with the projection of the Lie geometric setting. We will start by collecting some properties of discrete isothermic surfaces in the projective light cone.

Let $g$ be a discrete isothermic surface in the light cone. Then, $G=\spn{g}$ is isothermic in the sense that there is a cross ratio factorizing edge-labelling (see \cite[Sect 4.3]{bobenko2008}): let $\phi: \Z^2 \to \C$ denote a complex valued map. On any elementary quadrilateral $(ijkl)$ we define the \emph{cross ratio of $\phi$} via
\begin{align}\label{eq:CRDef}
  [\phi_i, \phi_j, \phi_k, \phi_l] := \frac{\phi_i - \phi_j}{\phi_j - \phi_k}\frac{\phi_k - \phi_l}{\phi_l-\phi_i}. 
\end{align}
The map $G = \spn{g}$ corresponds to a complex valued map $\phi$ via stereographic projection \eqref{eq:StereoProj}. We define the \emph{cross ratio of $G$} via
\begin{align*}
  [G_i, G_j, G_k, G_l] = [\phi_i, \phi_j, \phi_k, \phi_l].
\end{align*}
Note that the identification of $G$ with $\phi$ depends on a choice of basis in $\R^{3,1}$. However, since the choice of homogeneous coordinates corresponds to a complex M\"{o}bius transformation of the complex plane, it does not affect the value of $[G_i, G_j, G_k, G_l]$. 

Then, $G$ is discrete isothermic if and only if there is an edge-function $m$ with the following properties: 
\begin{enumerate}
	\item $m$ is constant across opposite edges of faces, i.e., $m_{ij}=m_{lk}$ and $m_{il}=m_{jk}$, and
  \item $m$ factorizes the cross ratio, that is
    \begin{align*}
      cr_{ijkl} = \frac{m_{il}}{m_{ij}}<0.
    \end{align*}
\end{enumerate}
We call $m$ the \emph{cross ratio factorizing edge-labelling}\footnote{Note that many publications, such as \cite{burstall2018}, use the cross ratio factorizing function $a_{ij}=\tfrac{1}{m_{ij}}$.}. 

From now on, we assume for each elementary quadrilateral $(ijkl)\subset \Z^2$ that ${\operatorname{dim}\spn{G_i + G_j + G_k + G_l}\geq 3}$, that is, $G_i, G_j, G_k, G_l$ are not collinear.  $G$ can be characterized as isothermic by the existence of a \emph{Moutard lift} (see \cite[Thms~2.31 and 2.32]{bobenko2008}): There is a lift $\tilde{g} \in \Gamma G$ such that
\begin{align*}
	\delta \tilde{g}_{ik} ||\delta \tilde{g}_{jl}. 
\end{align*}
In our investigation we will use the following relationship to the cross ratio factorizing edge-labelling
\begin{align*}
	(\tilde{g}_i, \tilde{g}_j) = \frac{1}{m_{ij}},
\end{align*}
a proof for which may be found in \cite[Sec 3]{burstall2018}. Further, it was proved in \cite[Thm 3.6]{burstall2020} that for any closed discrete $1$-form with $\zeta_{ij} \in G_i \wedge G_j$, there is a Moutard lift $\tilde{g}$ such that
\begin{align*}
	\zeta_{ij} = -\tilde{g}_i \wedge \tilde{g}_j.
\end{align*}
As a consequence of this, we note the following Lemma.

\begin{lem}\label{lem:MoutardLiftClosed1Form}
  Let $G:\Z^2 \to \mathbb{P}\L$ be a discrete surface in the projectified light cone and $\zeta$ a discrete $1$-form with $\zeta_{ij} \in G_i \wedge G_j$. Then, $\zeta$ is closed if and only if $G$ is isothermic and for any lift $g\in \Gamma G$ of $G$
  \begin{align}\label{eq:MoutardLiftClosed1Form}
    \zeta_{ij} = \frac{1}{m_{ij}(g_i, g_j)} g_{ij} \wedge dg_{ij},
  \end{align}
  where $m$ is a cross ratio factorizing edge-labelling of $G$. 
\end{lem}

\begin{proof}
  Let $G$ be isothermic, then there is a Moutard lift ${\tilde{g}= \lambda g \in \Gamma G}$ such that $\tfrac{1}{m_{ij}} = (\tilde{g}_i, \tilde{g}_j)$. Then $\zeta_{ij} = -\tilde{g}_i \wedge \tilde{g}_j$, and closedness of $\zeta$ follows from the Moutard condition satisfied by $\tilde{g}$. 

  Conversely, if $\zeta$ is closed, there exists a Moutard lift of $G$ (which is thereby isothermic) such that
  \begin{align*}
    \zeta_{ij} = -\tilde{g}_i \wedge \tilde{g}_j = -\frac{(\tilde{g}_i,\tilde{g}_j)}{(g_i, g_j)} g_i \wedge g_j.
  \end{align*}
    and the claim follows from the fact that $\tfrac{1}{(\tilde{g}_i, \tilde{g}_j)}$ is a cross ratio factorizing edge-labelling. 
\end{proof}

$G$ comes with a family of flat connections: On the edge $(ij)$, define the map 
\begin{align*}
	\Gamma(t)_{ij}: \{j\}\times \R^{3,1} &\to \{i\}\times \R^{3,1} \\
	v&\mapsto \Gamma^{G_i}_{G_j}\left(1-\frac{t}{m_{ij}}\right)v,
\end{align*}
 that is, on each edge, $\Gamma(t)$ acts as the hyperbolic rotation (defined by $G_i$ and $G_j$) with parameter $1-\tfrac{t}{m_{ij}}$. If $t=m_{ij}$ for any edge $(ij)\subset \Z^2$, then $\Gamma(t)_{ij}$ is not well-defined; if $t=0$, then $\Gamma(0)$ is the identity. We define the set $\R^G := {\R^\times \setminus \{m_{ij}\!:~(ij) \textrm{~is edge in~}\Z^2\}}$. Then, $\Gamma(t)$ is a well-defined and non-trivial connection if $t \in \R^G$. We may write
\begin{align}\label{eq:Gamma}
	\Gamma(t)_{ij}v = v+\frac{t}{m_{ij}(g_i,g_j)}\left(\frac{1}{1-t/m_{ij}}(g_i, v)g_j - (g_j, v)g_i\right).
\end{align}
It was shown in \cite[Lemma 3.7]{burstall2018} that $\Gamma$ is flat, that is, 
\begin{align*}
	\Gamma(t)_{ij}\Gamma(t)_{jk}\Gamma(t)_{kl}\Gamma(t)_{li} = id,
\end{align*}
around each elementary quadrilateral. Via integration, we can obtain a discrete map $T(t):\Z^2 \to O(3,1)$ such that\footnote{One can think of $T(t)$ as a trivialisation of $\Gamma$ in the sense that $T(t)_i\Gamma(t)_{ij}T(t)_j^{-1} = id_{ij}$, where $id_{ij}$ denotes the trivial map $\{j\}\times \R^{3,1} \to \{i\}\times \R^{3,1}$.}
\begin{align}\label{eq:Ttrafo}
	T(t)_j = T(t)_i\Gamma(t)_{ij}.
\end{align}
Note that $T(t)$ is only unique up to a choice of initial condition.

\begin{lem}
	Let $G:\Z^2 \to \mathbb{P}\L$ be discrete isothermic with Moutard lift $\tilde{g}$. Define a map 
	$$G(t)=T(t) G:=\spn{T(t) \tilde{g}}.$$ 
	Then, $G(t)$ is also discrete isothermic with Moutard lift $\tilde{g}(t) = T(t) \tilde{g}$. 
\end{lem}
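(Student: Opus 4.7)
The plan is to verify that $\tilde{g}(t) = T(t)\tilde{g}$ is a Moutard lift of $G(t) = \spn{T(t)\tilde{g}}$; the discrete isothermicity of $G(t)$ in $\mathbb{P}\L$ then follows from the characterization recalled above the lemma.

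First, I would evaluate the transport of $\tilde{g}$ across an edge using \eqref{eq:Gamma}. Because $(\tilde{g}_j, \tilde{g}_j) = 0$ and $(\tilde{g}_i, \tilde{g}_j) = \tfrac{1}{m_{ij}}$ (the latter from the Moutard property of $\tilde{g}$), the formula collapses to
\begin{align*}
\Gamma(t)_{ij}\tilde{g}_j = \tfrac{1}{1-t/m_{ij}}\,\tilde{g}_j,
\end{align*}
and hence \eqref{eq:Ttrafo} yields the edge formula
\begin{align*}
\tilde{g}(t)_j = \tfrac{1}{1-t/m_{ij}}\,T(t)_i\tilde{g}_j.
\end{align*}
Orthogonality of $T(t)_i$ immediately gives $(\tilde{g}(t)_i, \tilde{g}(t)_j) = \tfrac{1}{m_{ij}-t}$, identifying the candidate cross-ratio factorizing labelling of $G(t)$ as $m(t)_{ij} = m_{ij} - t$; its constancy on opposite edges of each face is inherited from $m$.

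Next, I would verify $\delta\tilde{g}(t)_{ik}\parallel\delta\tilde{g}(t)_{jl}$ on each quadrilateral $(ijkl)$. Parallelism is preserved under the linear isomorphism $T(t)_i^{-1}$, so I would pull both diagonals back to the frame at vertex $i$. Reaching $k$ along $i\to j\to k$ and expanding with \eqref{eq:Gamma}, the telescoping enabled by $(\tilde{g}_i,\tilde{g}_j) = \tfrac{1}{m_{ij}}$ produces
\begin{align*}
T(t)_i^{-1}\delta\tilde{g}(t)_{ik} = \lambda_{jk}\,\delta\tilde{g}_{ik} - \lambda_{ij}\lambda_{jk}\,t\,(\tilde{g}_i,\tilde{g}_k)\,\tilde{g}_j,
\end{align*}
where $\lambda_{ab} := \tfrac{1}{1-t/m_{ab}}$; the analogous expression via $i\to l\to k$ must agree by flatness of $\Gamma(t)$, \cite[Lemma~3.7]{burstall2018}. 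Combined with the edge-labelling symmetries $m_{ij} = m_{lk}$ and $m_{il} = m_{jk}$, flatness translates into the scalar identity $(\lambda_{il}-\lambda_{lk})\mu = \lambda_{il}\lambda_{lk}\,t\,(\tilde{g}_i,\tilde{g}_k)$ after substituting the original Moutard relation $\delta\tilde{g}_{ik} = \mu\,\delta\tilde{g}_{jl}$. Comparing with
\begin{align*}
T(t)_i^{-1}\delta\tilde{g}(t)_{jl} = \lambda_{ij}\tilde{g}_j - \lambda_{il}\tilde{g}_l,
\end{align*}
this identity is precisely what is needed to make the first right-hand side a scalar multiple of the second; hence $\tilde{g}(t)$ has parallel diagonals on every face.

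The main obstacle is the bookkeeping in this last step: the two path expansions each produce several cross-terms spread over $\tilde{g}_i,\tilde{g}_j,\tilde{g}_k,\tilde{g}_l$, and one has to check that, after invoking flatness together with the edge-labelling symmetries, these recombine into clean multiples of $\delta\tilde{g}_{ik}$ and $\delta\tilde{g}_{jl}$ before the Moutard relation of the original $\tilde{g}$ can be used to close the argument.
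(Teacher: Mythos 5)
Your argument is correct and follows the same overall strategy as the paper's proof: transport the Moutard lift across edges using \eqref{eq:Gamma} (the collapse $\Gamma(t)_{ij}\tilde g_j=\tfrac{1}{1-t/m_{ij}}\tilde g_j$ is exactly the paper's starting point), express both diagonals of $\tilde g(t)$ in the frame at a common vertex, and check that they stay parallel with the original factor (your $\mu$ is the paper's $\lambda$). I verified your two displayed expansions of $T(t)_i^{-1}\delta\tilde g(t)_{ik}$ and $T(t)_i^{-1}\delta\tilde g(t)_{jl}$; they agree with the paper's. The one genuine difference is how the closing scalar identity is obtained. The paper computes $(\tilde g_i,\tilde g_k)=(\tilde g_i,\tilde g_i-\delta\tilde g_{ik})=\lambda\left(\tfrac{1}{m_{il}}-\tfrac{1}{m_{ij}}\right)$ directly from the Moutard relation and the normalisation $(\tilde g_i,\tilde g_j)=\tfrac{1}{m_{ij}}$, and substitutes this into the expansion of $\delta(T(t)\tilde g)_{ik}$. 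You instead extract the equivalent identity $(\lambda_{il}-\lambda_{ij})\mu=\lambda_{ij}\lambda_{il}\,t\,(\tilde g_i,\tilde g_k)$ by comparing the two path expansions $i\to j\to k$ and $i\to l\to k$ and appealing to flatness of $\Gamma(t)$ (\cite[Lemma 3.7]{burstall2018}) together with the edge-labelling symmetries; I checked that this comparison does yield precisely that identity (after cancelling $\delta\tilde g_{jl}\neq 0$, a non-degeneracy assumption you should state). Both routes are valid: the paper's is self-contained in that it only needs flatness to define $T(t)$ in the first place, whereas yours trades the explicit inner-product computation for a second appeal to the flatness lemma — arguably more conceptual, but dependent on an external result that the direct computation avoids.
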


\begin{proof}
	Because $\tilde{g}$ is a Moutard lift of $G$, there is $\lambda$ such that
	\begin{align*}
		\delta \tilde{g}_{ik} = \lambda \delta \tilde{g}_{jl}.
	\end{align*}
	We compute
	\begin{align*}
		T(t)_j \tilde{g}_j - T(t)_l \tilde{g}_l &= T(t)_i\left[\Gamma(t)_{ij}\tilde{g}_j - \Gamma(t)_{il}\tilde{g}_l\right] \\
		&=T(t)_i\left[\frac{m_{il}}{m_{il}-t}(\tilde{g}_j-\tilde{g}_l) - \frac{t(m_{ij}-m_{il})}{(m_{ij}-t)(m_{il}-t)}\tilde{g}_j\right],
	\end{align*}
	and, using \eqref{eq:Gamma},
		\begin{align*}
			T(t)_i \tilde{g}_i - T(t)_k \tilde{g}_k &=T(t)_i\left[\tilde{g}_i - T(t)_i^{-1} T(t)_j \Gamma_{jk} \tilde{g}_k\right] \\
			&=T(t)_i\left[\frac{m_{jk}}{m_{jk}-t}(\tilde{g}_i-\tilde{g}_k) - \frac{t m_{jk} m_{ij}}{(m_{jk}-t)(m_{ij}-t)}(\tilde{g}_i,\tilde{g}_k)\tilde{g}_j\right].
		\end{align*}
		Now, because of 
		\begin{align*}
			(\tilde{g}_i, \tilde{g}_k) =(\tilde{g}_i, \tilde{g}_i - \delta \tilde{g}_{ik}) 
			=\lambda \left(\frac{1}{m_{il}}-\frac{1}{m_{ij}}\right) = \lambda \frac{m_{ij}-m_{il}}{m_{ij} m_{il}}, 
		\end{align*}
		we see that
		\begin{align*}
			\delta (T(t) \tilde{g})_{ik} = \lambda \delta (T(t)\tilde{g})_{jl},
		\end{align*}
		proving the lemma.
\end{proof}

\begin{defi}
	Let $L=x+G$ be a discrete L-isothermic surface, $\Gamma(t)$ the family of flat connections of its discrete lightlike Gauss map $G$, and $T(t)$ such that $T(t)_j = T(t)_i\Gamma(t)_{ij}$. The \emph{Calapso transformation} of $L$ is $L(t)=x(t) + G(t)$, where $x(t)$ and $G(t)$ are given by
	\begin{align}
		dx(t)_{ij} &:= T(t)_i x_i - T(t)_j x_j + \tfrac{t}{m_{ij}(g_i,g_j)}\left((g_i,x_i)T(t)_jg_j - (g_j, x_j)T(t)_ig_i\right)  \label{eq:CalapsoX}\\
		G(t) &:= T(t) G \label{eq:CalapsoG}.
	\end{align}
	We call $T(t)$ the \emph{lightlike part of the Calapso transformation}. 
\end{defi}

\begin{prop}
	The Calapso transformation $L(t)$ of a discrete L-isothermic surface $L$ is well-defined and L-isothermic.
\end{prop}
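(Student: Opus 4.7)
The plan is to exhibit an explicit Christoffel dual pair witnessing L-isothermicity of $L(t)$, from which closure of $dx(t)$ falls out as a byproduct. Let $\tilde g \in \Gamma G$ denote the Moutard lift of $G$ that is Christoffel dual to $x$, and set $\tilde g(t) := T(t)\tilde g$. By the preceding lemma, $\tilde g(t)$ is a Moutard lift of $G(t)$, and crucially shares the same Moutard ratio $\lambda$ with $\tilde g$ on every face $(ijkl)$: $\delta \tilde g_{ik} = \lambda\,\delta \tilde g_{jl}$ and $\delta \tilde g(t)_{ik} = \lambda\,\delta \tilde g(t)_{jl}$.

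First I would establish that $x(t)$ and $\tilde g(t)$ are edge parallel with the same proportionality $\alpha$ as the original pair. Substituting $T(t)_j = T(t)_i \Gamma(t)_{ij}$ into \eqref{eq:CalapsoX} and using the explicit form \eqref{eq:Gamma} together with the Moutard pairing identity $(\tilde g_i, \tilde g_j) = 1/m_{ij}$, direct algebraic manipulation yields
\[
T(t)_i^{-1} dx(t)_{ij} \;=\; dx_{ij} + \frac{t\, m_{ij}}{m_{ij} - t}\,(\tilde g_i, dx_{ij})\, \tilde g_j \;=\; \alpha_{ij}\, T(t)_i^{-1} d\tilde g(t)_{ij},
\]
where $dx_{ij} = \alpha_{ij}\, d\tilde g_{ij}$. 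Hence $dx(t)_{ij} = \alpha_{ij}\, d\tilde g(t)_{ij}$, so $x(t)$ is planar and edge parallel to the planar Moutard lift $\tilde g(t)$; by the discussion preceding Proposition~\ref{prop:closedForm}, both $x(t)$ and $\tilde g(t)$ are then automatically circular.

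Closure of $dx(t)$ is next proved by summing around an elementary quadrilateral $(ijkl)$. The telescoping contribution $T(t)_i x_i - T(t)_j x_j$ drops out, and the remainder factors as
\[
\oint dx(t) \;=\; t\, \delta \tilde g(t)_{jl}\,\bigl[\delta a_{ik} - \lambda\, \delta a_{jl}\bigr], \qquad a_i := (\tilde g_i, x_i).
\]
The crux of the proof is therefore the scalar identity $\delta a_{ik} = \lambda\, \delta a_{jl}$. I would derive it from the L-isothermic hypothesis $A(x, \tilde g) = 0$ combined with Moutardness of $\tilde g$: the former gives $\delta x_{ik} - \lambda\, \delta x_{jl} = \mu\, \delta \tilde g_{jl}$ for some scalar $\mu$, and expanding $a_i - a_k$ via this relation, together with the Moutard consequence $(\tilde g_i, \tilde g_k) = \lambda^2 (\tilde g_j, \tilde g_l)$ and the cross-ratio-factorizing pairings $(\tilde g_i, \tilde g_j) = (\tilde g_l, \tilde g_k)$, $(\tilde g_i, \tilde g_l) = (\tilde g_j, \tilde g_k)$, produces the desired cancellation.

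Finally, L-isothermicity of $L(t)$ follows by Proposition~\ref{prop:closedForm} applied to $(x(t), \tilde g(t))$: the pair is planar and edge parallel by the first step, and closure of the $1$-form $\zeta(t)_{ij} = \tilde g(t)_{ij} \wedge dx(t)_{ij} = -\alpha_{ij}\,\tilde g(t)_i \wedge \tilde g(t)_j$ is obtained by an argument formally parallel to the closure of $dx(t)$, using the identity $\tilde g(t)_i \wedge \tilde g(t)_j = \tfrac{m_{ij}}{m_{ij}-t}\, T(t)_i[\tilde g_i \wedge \tilde g_j]$ that comes from $T(t)_j = T(t)_i \Gamma(t)_{ij}$ and $\Gamma(t)_{ij}\tilde g_j = \tfrac{m_{ij}}{m_{ij}-t}\tilde g_j$. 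The main obstacle is the scalar identity for $\delta a$: it is where both Moutardness of $\tilde g$ and the L-isothermic hypothesis $A(x, \tilde g) = 0$ are used essentially, while the rest of the proof is algebraic bookkeeping around the explicit form of $\Gamma(t)$.
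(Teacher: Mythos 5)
Your overall architecture --- edge-parallelism of $x(t)$ to a transformed section of $G(t)$ with unchanged proportionality factors, then closure of \eqref{eq:CalapsoX} around each face, then Christoffel duality --- is the same as the paper's, but it rests on a false opening premise: that the Moutard lift $\tilde g$ is the section of $G$ that is Christoffel dual to $x$. These are in general two \emph{different} lifts. In the constructions of Subsections \ref{sec:Hyperplane} and \ref{sec:Quadric} the dual section $g$ is normalised by $(g,\p)=-1$, resp.\ $(g,x)=-1$, and it differs from the Moutard lift by a non-constant factor $\sigma$ (with $\tilde g=\sigma g$); already for discrete minimal surfaces one has $g=\tfrac{1}{|\phi|^2+1}\,2\bigl(\begin{smallmatrix}|\phi|^2&\phi\\\overline\phi&1\end{smallmatrix}\bigr)$, which is not Moutard. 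Since rescaling a light-cone section by a non-constant function destroys edge-parallelism, $dx_{ij}$ is \emph{not} proportional to $d\tilde g_{ij}$, and $A(x,\tilde g)\neq 0$ in general even though $A(x,g)=0$. This breaks both of your key steps: the second equality in your first display needs $(\tilde g_i,dx_{ij})=-\alpha_{ij}(\tilde g_i,\tilde g_j)$, which you only get from $dx_{ij}=\alpha_{ij}\,d\tilde g_{ij}$; and your derivation of the crux identity $\delta a_{ik}=\lambda\,\delta a_{jl}$ starts from $\delta x_{ik}-\lambda\,\delta x_{jl}\parallel\delta\tilde g_{jl}$, which is precisely the (generally false) statement $A(x,\tilde g)=0$. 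The same conflation re-enters in your final paragraph on the closure of $\zeta(t)$.

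The good news is that your lift-independent reduction of $\oint dx(t)$ to the scalar identity $\delta a_{ik}=\lambda\,\delta a_{jl}$ with $a=(\tilde g,x)$ is correct, and that identity \emph{is} true --- it just cannot be obtained the way you propose. The repair is the paper's route: for the actual dual section $g$ one has $(g_i,dx_{ij})=-\lambda_{ij}(g_i,g_j)$ simply because $(g_i,g_i)=0$, whence
\begin{align*}
	dx(t)_{ij}=T(t)_i\Bigl[dx_{ij}+\tfrac{t}{(m_{ij}-t)(g_i,g_j)}(g_i,dx_{ij})\,g_j\Bigr]=\lambda_{ij}\,d\bigl(T(t)g\bigr)_{ij}
\end{align*}
with the \emph{same} coefficients $\lambda_{ij}$ as at $t=0$. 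Summing over a face gives $\sum_n c_n\,T(t)_n g_n$ with $t$-independent coefficients $c_n$; closure at $t=0$ forces $(c_n)$ to be proportional to the coefficients of the unique linear relation among $g_i,g_j,g_k,g_l$, which after rescaling to the Moutard lift is the Moutard relation with ratio $\lambda$; since $\tilde g(t)$ satisfies the Moutard relation with the \emph{same} $\lambda$ by the preceding lemma, the sum vanishes for all $t$. This is what the paper means by ``the Moutard equation satisfied by $\tilde g(t)$ is the integrability condition of \eqref{eq:CalapsoX}''. If you wish to keep your scalar-identity formulation, derive $\delta a_{ik}=\lambda\,\delta a_{jl}$ from this coefficient argument rather than from $A(x,\tilde g)=0$.
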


\begin{proof}
	We have $dx_{ij} = \lambda dg_{ij}$. Using \eqref{eq:CalapsoX} and \eqref{eq:CalapsoG} one finds $dx(t)_{ij} = \lambda d(T(t)g)_{ij}$. The Moutard equation satisfied by $\tilde{g}(t)$ is then seen to be the integrability condition of \eqref{eq:CalapsoX}. Further, $x(t)$ and $g(t):=T(t)g$ are Christoffel dual. Thus $L(t)$ is L-isothermic.  
\end{proof}

$L(t)$ itself defines a family of flat connections $\Gamma^t(s)$ (belonging to the discrete isothermic surface $G(t)$) and a map $T^t(s)$ satisfying
\begin{align*}
	T^t(s)_j = T^t(s)_i\Gamma^t(s)_{ij}.
\end{align*}
Since the Moutard lift of $G(t)$ is given by $\tilde{g}(t) = T(t)\tilde{g}$, we can compute the cross ratio factorizing edge-labelling $m(t)$ of $G(t)$ to be
\begin{align*}
	m_{ij}(t) = m_{ij}-t.
\end{align*}
Now, according to \eqref{eq:Gamma}, we have
\begin{align*}
	\Gamma^t(s)_{ij} v = v + \tfrac{s}{(m_{ij}-t)(\tilde{g}(t)_i, \tilde{g}(t)_j)}\left(
		\tfrac{m_{ij}-t}{m_{ij}-t-s}(\tilde{g}(t)_i,v)\tilde{g}(t)_j - (\tilde{g}(t)_j, v)\tilde{g}(t)_i
	\right), 
\end{align*}
and $(\tilde{g}(t)_i, \tilde{g}(t)_j) = (g_i, g_j)\frac{m_{ij}}{m_{ij}-t}$ can be used to show
\begin{align*}
	T(t)_i ^{-1}\Gamma^t_{ij}(s) T(t)_j = \Gamma(s+t)_{ij}. 
\end{align*}
Therefore, we have
\begin{align*}
	\Gamma^t_{ij}(s) = T^t(s)_i^{-1}T^t(s)_j = T(t)_iT(t+s)_i^{-1} T(t+s)_j T(t)_j^{-1},
\end{align*}
hence, with the right choice of initial condition, 
\begin{align*}
	T^t(s)T(t) = T(t+s).
\end{align*}
We proved the following:

\begin{prop}\label{lem:Additive}
	The lightlike part of the Calapso transformation is additive in the following sense: Let $L(t)$ be a Calapso transform of a discrete L-isothermic surface $L$ with parameter $t$ and let $T^t(s)$ denote the lightlike part of the Calapso transformation of $L(t)$. Then,  
	\begin{align*}
		T(s+t) = T^t(s)T(t).
	\end{align*}
	In particular,
	\begin{align*}
		T(t)^{-1} = T^t(-t).
	\end{align*}
\end{prop}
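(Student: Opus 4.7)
My plan is to establish the identity at the level of the flat connections $\Gamma(t)$ associated to $G$, by showing that $\Gamma^t(s)$, the flat connection of the transformed Gauss map $G(t)$, becomes $\Gamma(s+t)$ after conjugation by $T(t)$. The additive law for $T$ will then follow by essentially unique integration of a flat connection on $\Z^2$.

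The first step is to determine the cross-ratio factorizing edge-labelling $m(t)$ of $G(t)$. From the preceding lemma, $\tilde g(t) = T(t)\tilde g$ is a Moutard lift of $G(t)$, so using orthogonality of $T(t)_i$ and the relation $T(t)_j = T(t)_i\Gamma(t)_{ij}$, together with the explicit formula \eqref{eq:Gamma}, one can compute $(\tilde g(t)_i, \tilde g(t)_j)$ in closed form. Combined with $(\tilde g_i, \tilde g_j) = 1/m_{ij}$, this yields $m_{ij}(t) = m_{ij} - t$, and the same computation applied to a generic lift $g$ produces the rescaling $(T(t)_i g_i, T(t)_j g_j) = (g_i, g_j)\,m_{ij}/(m_{ij}-t)$.

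Plugging this data into the general formula \eqref{eq:Gamma} for $\Gamma^t(s)$, the key identity to verify on each edge is
\begin{align*}
	T(t)_i^{-1}\,\Gamma^t(s)_{ij}\,T(t)_j = \Gamma(s+t)_{ij}.
\end{align*}
This reduces to a direct manipulation of rational expressions in $s$ and $t$. The main obstacle is the bookkeeping here, but the shifted denominator $m_{ij}-t$ coming from the rescaled inner product is precisely what is needed for the denominators on both sides to combine to $m_{ij} - (s+t)$, so the identity falls out after a short calculation.

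Finally, multiplying the defining relation $T^t(s)_j = T^t(s)_i\,\Gamma^t(s)_{ij}$ on the right by $T(t)_j$ and substituting the conjugation identity gives
\begin{align*}
	(T^t(s)T(t))_j = (T^t(s)T(t))_i\,\Gamma(s+t)_{ij},
\end{align*}
so $T^t(s)T(t)$ satisfies the same recursion \eqref{eq:Ttrafo} as $T(s+t)$. Fixing compatible initial conditions at a base vertex forces $T^t(s)T(t) = T(s+t)$ globally on $\Z^2$, and the specialization $s = -t$, using $T(0) = \Id$, yields the inversion formula $T(t)^{-1} = T^t(-t)$.
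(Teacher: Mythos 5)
Your proposal is correct and follows essentially the same route as the paper: compute $m_{ij}(t)=m_{ij}-t$ and the rescaled inner product $(T(t)_ig_i,T(t)_jg_j)=(g_i,g_j)\tfrac{m_{ij}}{m_{ij}-t}$ from the Moutard lift, verify the conjugation identity $T(t)_i^{-1}\Gamma^t(s)_{ij}T(t)_j=\Gamma(s+t)_{ij}$ via \eqref{eq:Gamma}, and conclude by uniqueness of the trivialisation up to initial condition. No substantive difference from the paper's argument.
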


\begin{bem}
	The lightlike part of the Calapso transform is the Calapso transformation of the $\mathbb{P}\L\cong S^2$-valued map $G$. Hence Proposition \ref{lem:Additive} is a special case of \cite[Thm 3.13]{hertrich-jeromin2000} for which we gave a proof within our setup.
\end{bem}

\section{Discrete Weierstrass type represenations}\label{sec:Weierstrass}
Let $\phi: \Z^2 \to \C$ be a \emph{discrete holomorphic function}, that is, there exists an edge-labelling $m$ such that the cross ratio of $\phi$ satisfies
\begin{align*}
	[\phi_i, \phi_j, \phi_k, \phi_l]=\frac{m_{il}}{m_{ij}}<0,
\end{align*}
on any face $(ijkl)$ (see \eqref{eq:CRDef}). We can lift $\phi$ into $\L$ via inverse stereographic projection. The resulting map (see \eqref{eq:StereoProj})
\begin{align}\label{eq:phiLift}
	g =2 \begin{pmatrix}
		|\phi|^2 &\phi \\ \overline{\phi} &1
	\end{pmatrix},
\end{align}
gives rise to a discrete isothermic surface $G:=\spn{g}:\Z^2\to \mathbb{P}\L$ with the same cross ratio factorizing edge-labelling $m$, that is,
\begin{align*}
	[G_i, G_j, G_k, G_l]=\frac{m_{il}}{m_{ij}}<0. 
\end{align*}
Written in terms of the lift in \eqref{eq:phiLift}, the closed $1$-form $\zeta$ of Lemma \ref{lem:MoutardLiftClosed1Form} takes the form
\begin{align}\label{eq:zetaLift}
	\begin{split}
	\zeta_{ij} &= \Re\left\{
		\frac{1}{m_{ij}(\phi_i-\phi_j)}~\left(
			(1-\phi_i\phi_j)~e_0\wedge e_1+\i(1+\phi_i\phi_j)~e_0\wedge e_2 \right.\right. \\
			&\quad + (\phi_i + \phi_j)~e_0\wedge e_3 +\i(\phi_i+\phi_j)~e_1\wedge e_2 + (1+\phi_i\phi_j)~e_1\wedge e_3 \\ 
      &\quad\left.\left.+ \i(1-\phi_i\phi_j)~e_2\wedge e_3
		\right)
	\right\}
	\end{split}
\end{align}

\subsection{Discrete surfaces in hyperplanes of \texorpdfstring{$\R^{3,1}$}{R31}}\label{sec:Hyperplane}
Chose a non-zero $\p \in \R^{3,1}$. We then have that $\zeta\p$ is a closed $1$-form with values in $\R^{3,1}$ and we may (locally) integrate to obtain a discrete surface $x:\Z^2\to\R^{3,1}$ such that $dx_{ij}=-\zeta_{ij}\p$ (see \cite[Prop. 2.17]{burstall2020}). Then, $dx\perp \p$ because $\zeta$ is skew-symmetric. Hence, $x$ takes values in an affine hyperplane with normal $\p$. If at a vertex $i\in \Z^2$ $\p \perp G_i$, then $\zeta_{ij}\p \in G_i$ and $\zeta_{il}\p\in G_i$ are parallel, hence the image of the quadrilateral $(ijkl)$ under $x$ degenerates. 

Thus, for non-degeneracy, we assume $\p\not\perp G_i$ and we may choose $g^n\in \Gamma G$ such that ${(g^n,\p)=-1}$. Then 
\begin{align*}
	dx_{ij} = -\zeta_{ij}\p = \frac{1}{m_{ij}(g^n_i,g^n_j)}dg^n_{ij}, 
\end{align*}
hence, $x$ is edge-parallel to $g^n$. Moreover, we may now write $\zeta_{ij} = g^n_{ij} \wedge dx_{ij}$. By Proposition \ref{prop:closedForm}  $L=x+G$ is a discrete L-isothermic surface and $x$ and $g^n$ are Christoffel dual. In particular, the light cone mean curvature of $x$ with respect to $g^n$ vanishes, and we learn the following from the examples given in \ref{exp:LightlikeCurvature}: 

\begin{itemize}
	\item If $\p$ is timelike, $x$ is discrete minimal in  Euclidean $3$-space.
	\item If $\p$ is spacelike, $x$ is discrete maximal in  Lorentz $3$-space.
	\item If $\p$ is lightlike, $x$ is discrete i-minimal in  isotropic $3$-space.
\end{itemize}

\begin{figure}%
	\centering
	\subfigure[][The discrete power function $z\mapsto z^{2/3}$ used to create the discrete surfaces in \ref{fig:ZMCPower}]{%
		\label{fig:holoPower}
		\includegraphics[width=0.25\textwidth]{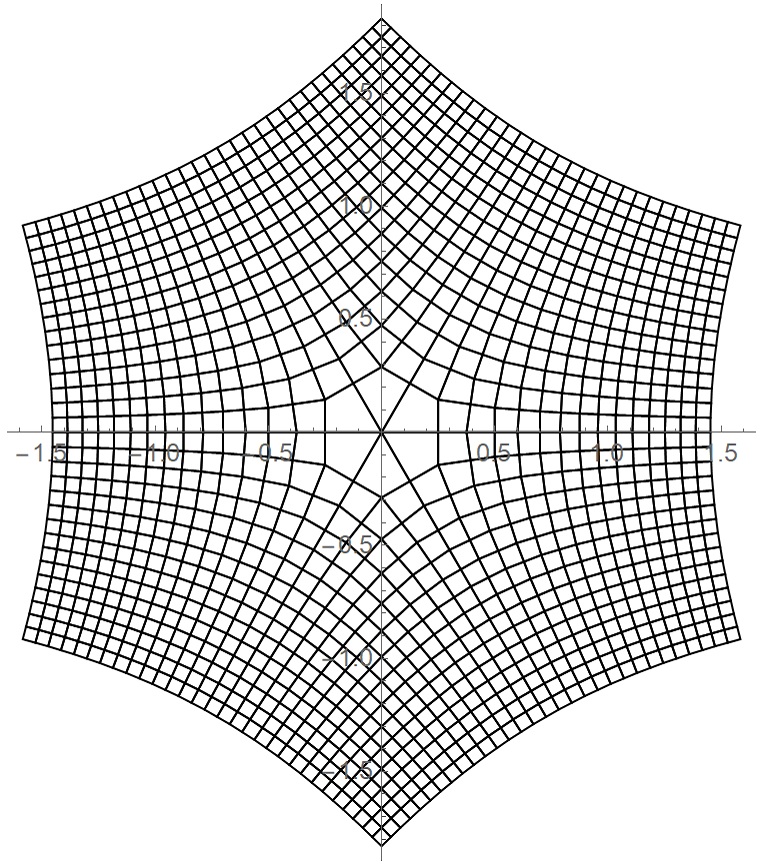}%
	}%
  \hspace{0.8cm}
    \subfigure[][Using the discrete holomorphic power function in \ref{fig:holoPower} and \eqref{eq:WeierFormula} we obtain a discrete minimal surface in Euclidean space ($\mu = 1$) on the left, a discrete maximal surface in Lorentz space ($\mu = -1$) on the right and a discrete i-minimal surface ($\mu = 0$) in the middle]{%
    \label{fig:ZMCPower}
    \includegraphics[width=0.6\textwidth]{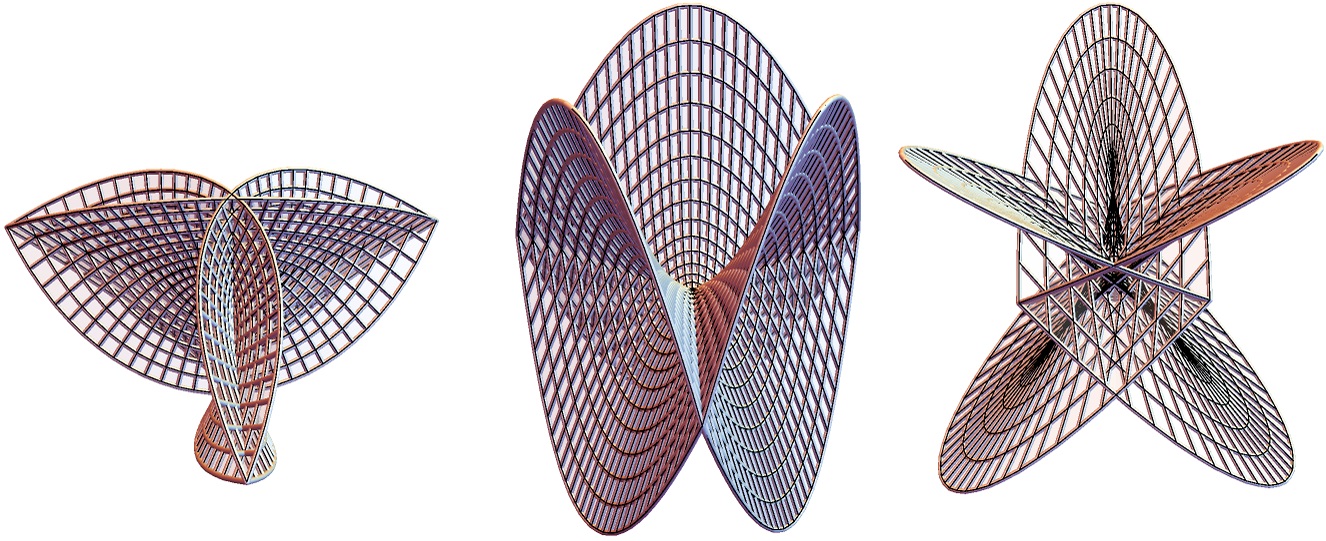}%
	}%
  \\
    \subfigure[][The discrete power function $z\mapsto z^{2/3}$, translated and used to create the discrete surfaces in \ref{fig:ZMCTransPower}]{%
		\label{fig:holoTransPower}%
		\includegraphics[width=0.25\textwidth]{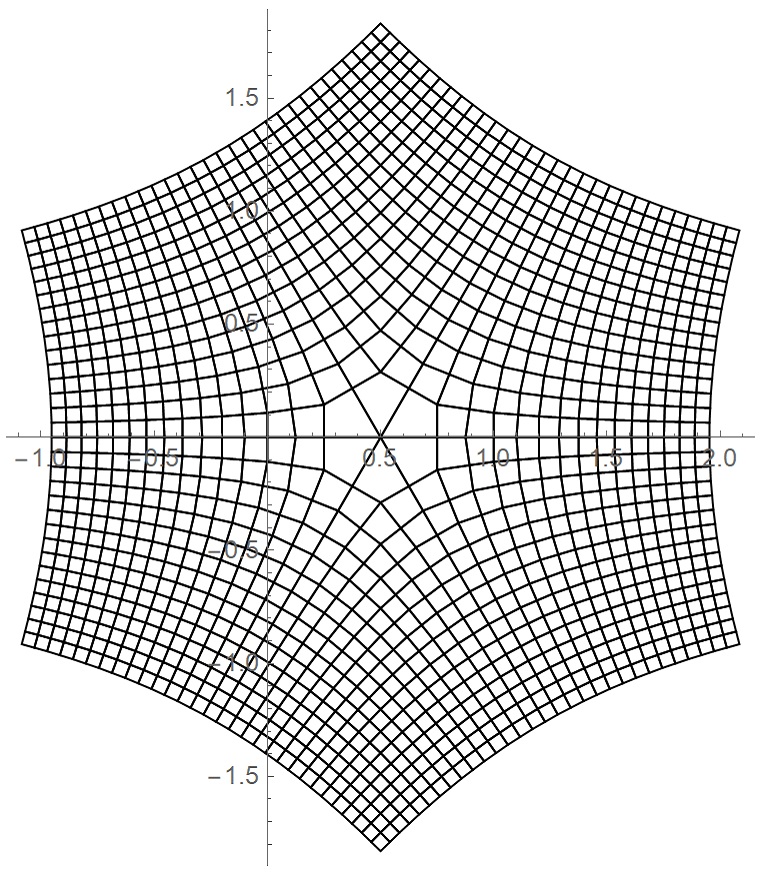}%
	}%
  \hspace{0.8cm}
    \subfigure[][Using the translated discrete holomorphic power function in \ref{fig:holoTransPower} and \eqref{eq:WeierFormula} we obtain a discrete minimal surface in Euclidean space ($\mu = 1$) on the left, a discrete maximal surface in Lorentz space ($\mu = -1$) on the right and a discrete i-minimal surface ($\mu = 0$) in the middle]{%
    \label{fig:ZMCTransPower}
    \includegraphics[width=0.6\textwidth]{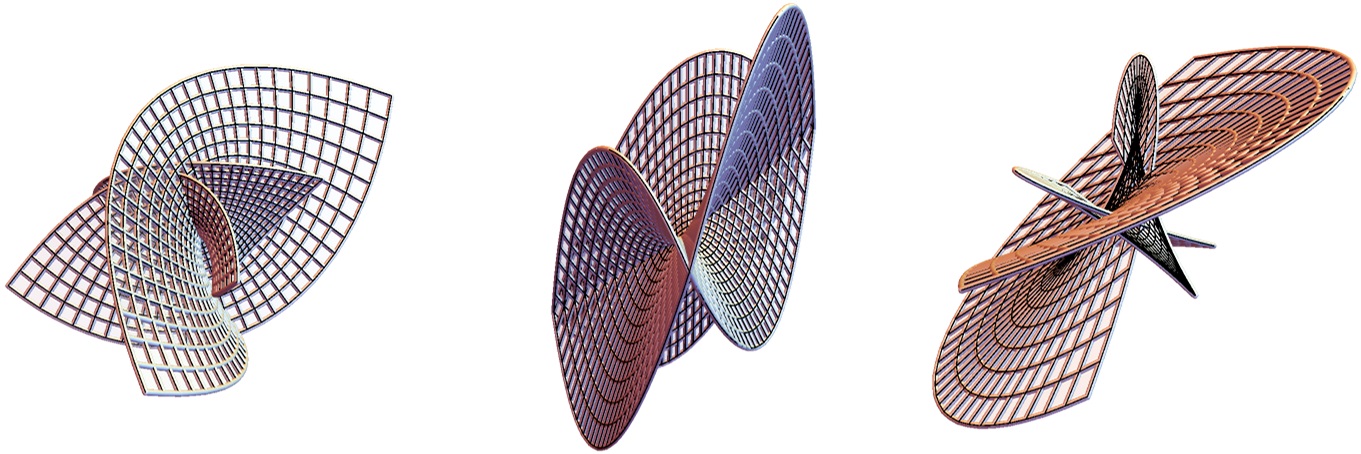}%
	}%
  \\
    \subfigure[][The discrete ex\-po\-nen\-tial map,  translated and used to create the discrete surfaces in \ref{fig:ZMCTransExp}]{%
		\label{fig:holoTransExp}%
		\includegraphics[width=0.25\textwidth]{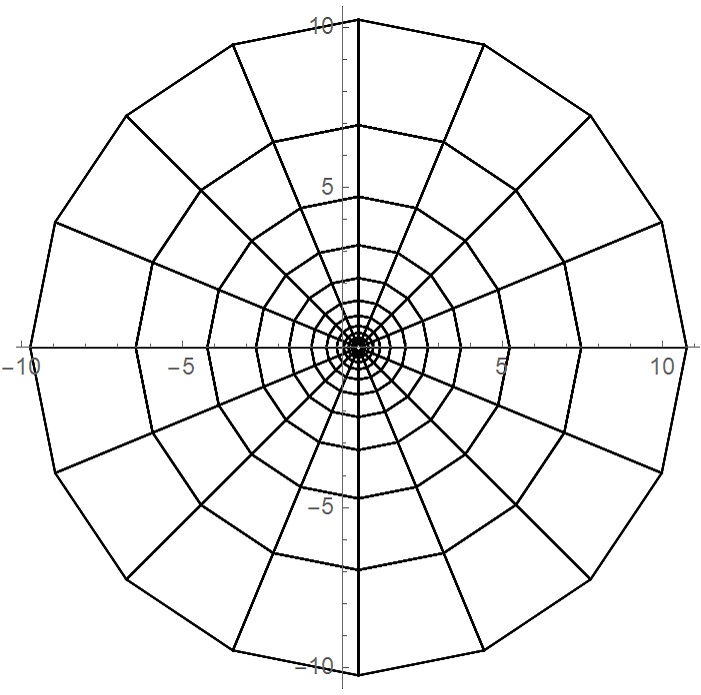}%
	}%
  \hspace{0.8cm}
    \subfigure[][Using the translated discrete exponential map in \ref{fig:holoTransExp} and \eqref{eq:WeierFormula} we obtain a discrete minimal surface in Euclidean space ($\mu = 1$) on the left, a discrete maximal surface in Lorentz space ($\mu = -1$) on the right and a discrete i-minimal surface ($\mu = 0$) in the middle]{%
    \label{fig:ZMCTransExp}
    \includegraphics[width=0.6\textwidth]{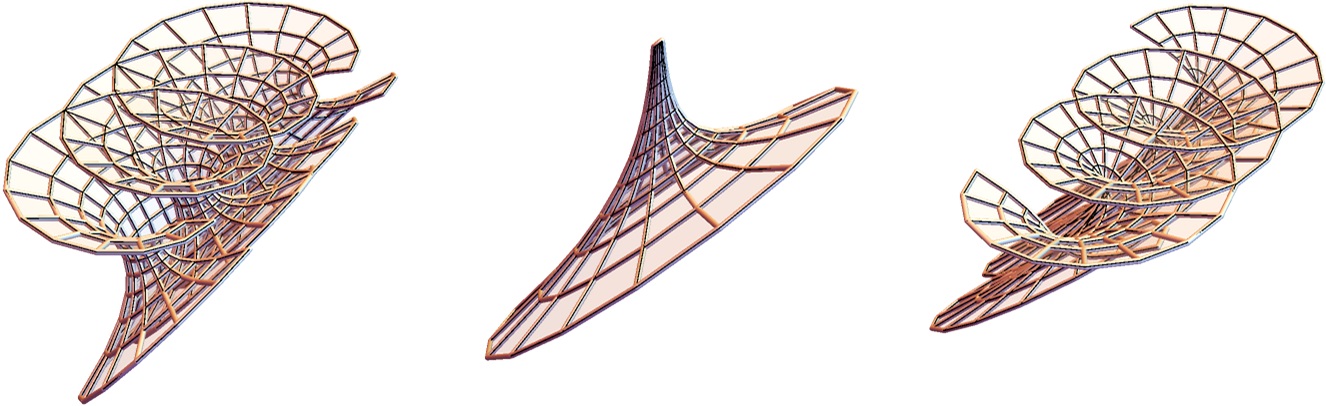}%
	}%
	\caption{Thre discrete holomorphic functions and the respective discrete zero mean curvature surfaces in Euclidean, Lorentz and isotropic space.}%
	\label{fig:ZMC}%
\end{figure}

We obtain explicit formulas for $x$ in terms of the discrete holomorphic function $\phi$ by using the formula for $\zeta$ in \eqref{eq:zetaLift}. For 
\begin{align*}
	\p = \tfrac{1}{2}(1-\mu)e_3 + \tfrac{1}{2}(1+\mu)e_0,
\end{align*}
we obtain 
\begin{align}\label{eq:WeierFormula}
\begin{split}
	dx_{ij} = \Re \left\{\tfrac{1}{m_{ij}(\phi_i-\phi_j)}\left(\tfrac{1-\mu}{2}(\phi_i + \phi_j)e_0 + (1-\mu\phi_i\phi_j)e_1 \right. \right.\\ 
\left.\left. + \i (1+\mu\phi_i\phi_j)e_2 + \tfrac{1+\mu}{2}(\phi_i+\phi_j)e_3\right) \right\},
\end{split}
\end{align}
as representation of a circular discrete surface in $\R^3\cong \spn{\p}^\perp$. For $\mu=1$, that is $\p = e_0$, we recover the usual discrete Weierstrass representation for discrete isothermic minimal surfaces in Euclidean space (see \cite[Thm 9]{bobenko1996}). If we set $\mu=-1$, that is $\p=e_{3}$, we get the Weierstrass representation for discrete maximal surfaces (see \cite[Thm~1.1]{yasumoto2015}). For $\mu=0$, that is $\p = \frac{e_{0}+e_{3}}{2}$, we obtain a discrete surface in isotropic space $I_{\p, \o}$ if we choose the initial point of $x$ to satisfy $(x,\p)=-1$. According to Example \ref{exp:LightlikeCurvature}.(\ref{exp:lightlikeIso}), $x$ is a discrete i-minimal surface as in \cite[Sect 4]{pottmann2007} and we derived a Weierstrass representation formula for this class, namely, 
\begin{align*}
	dx_{ij} = \text{Re}\{( e_{1} + \i e_{2} - (\phi_{i}+\phi_{j})\mathfrak{p}) \tfrac{1}{m_{ij}(\phi_{i}-\phi_{j})}\}. 
\end{align*}
In Figure \ref{fig:ZMC} we used the representation formula \eqref{eq:WeierFormula} and different values of $\mu$ to obtain discrete minimal, i-minimal and maximal surfaces from given holomorphic data (a discrete power function and the discrete exponential map). 

We summarize the results of this subsection in the following theorem: 
\begin{satz}\label{thm:HyperplaneReps}
	The Weierstrass-type representations for 
	\begin{itemize}
		\item discrete minimal surfaces in Euclidean $3$-space, 
		\item discrete maximal surfaces in Lorentzian $3$-space, and
		\item discrete i-minimal surfaces in isotropic $3$-space
	\end{itemize}
	can be viewed as certain applications of the $\Omega$-dual transformation to a prescribed discrete lightlike Gauss map.
\end{satz}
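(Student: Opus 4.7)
The plan is to show, in three steps, that the concrete Weierstrass formula \eqref{eq:WeierFormula} arises precisely as the $\Omega$-dual of a lightlike Gauss map built from holomorphic data, and then to read off the three classical cases by varying the causal character of the hyperplane normal $\mathfrak{p}$.

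First I would set up the Gauss map side: starting from a discrete holomorphic $\phi\colon\Z^2\to\C$ with cross-ratio factorising labelling $m$, lift it by inverse stereographic projection \eqref{eq:StereoProj} to obtain $g$ as in \eqref{eq:phiLift}, so that $G=\spn g$ is a discrete isothermic surface in $\mathbb{P}\L$ with the same labelling $m$. The associated $1$-form $\zeta_{ij}=\tfrac{1}{m_{ij}(g_i,g_j)}g_{ij}\wedge dg_{ij}$ lies in $G_i\wedge G_j$ and is closed (since $\zeta_{ij}+\zeta_{jk}+\zeta_{kl}+\zeta_{li}=\delta\tilde g_{ik}\wedge\delta\tilde g_{lj}=0$ on a Moutard lift).

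Second, I would integrate. Fix $\mathfrak{p}\in\R^{3,1}\setminus\{0\}$ with $\mathfrak{p}\not\perp G_i$ at every vertex, so that the lift $g^n\in\Gamma G$ normalised by $(g^n,\mathfrak{p})=-1$ exists. The $\R^{3,1}$-valued form $-\zeta\mathfrak{p}$ is closed, so (locally) integrates to a discrete surface $x$ with $dx_{ij}=-\zeta_{ij}\mathfrak{p}$; by skew-symmetry of $\zeta_{ij}$ the increments are orthogonal to $\mathfrak{p}$, so $x$ takes values in an affine hyperplane with normal $\mathfrak{p}$. A direct computation using $(g^n,\mathfrak{p})=-1$ gives $dx_{ij}=\tfrac{1}{m_{ij}(g^n_i,g^n_j)}dg^n_{ij}$, so $x$ and $g^n$ are edge-parallel and moreover $\zeta_{ij}=g^n_{ij}\wedge dx_{ij}$. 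By \Cref{prop:closedForm}, $L=x+G$ is a discrete L-isothermic surface and $x,g^n$ are Christoffel dual; equivalently, $x$ is the $\Omega$-dual of the lightlike Gauss map $G$ built from $\phi$. Substituting \eqref{eq:zetaLift} and the parametrisation $\mathfrak{p}=\tfrac{1-\mu}{2}e_3+\tfrac{1+\mu}{2}e_0$ recovers the explicit formula \eqref{eq:WeierFormula}.

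Third, I would identify the three cases by the sign of $(\mathfrak{p},\mathfrak{p})=-\mu$. For $\mu>0$ (e.g.\ $\mu=1$, $\mathfrak{p}=e_0$), $\mathfrak{p}$ is timelike and Example \ref{exp:LightlikeCurvature}(1) identifies $x$ as a discrete minimal surface in the Euclidean $3$-space $\mathfrak{p}^\perp$, reproducing \cite[Thm 9]{bobenko1996}. For $\mu<0$ (e.g.\ $\mu=-1$, $\mathfrak{p}=e_3$), $\mathfrak{p}$ is spacelike and the same example yields a discrete maximal surface in the Lorentz $3$-space, matching \cite[Thm 1.1]{yasumoto2015}. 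For $\mu=0$, $\mathfrak{p}=\tfrac{e_0+e_3}{2}$ is lightlike; normalising the integration constant by $(x,\mathfrak{p})=-1$ places $x$ in the isotropic space $I_{\mathfrak{p},\mathfrak{o}}$, and Example \ref{exp:LightlikeCurvature}(\ref{exp:lightlikeIso}) identifies it as a discrete i-minimal surface. The only real subtlety is the non-degeneracy condition $\mathfrak{p}\not\perp G_i$, which is what forces the choice of edge-parallel Gauss lift $g^n$; once that is in hand the identifications are immediate from the examples already catalogued.
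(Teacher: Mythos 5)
Your proposal is correct and follows essentially the same route as the paper's own argument in Subsection \ref{sec:Hyperplane}: lift the holomorphic data to a lightlike Gauss map, integrate $-\zeta\mathfrak{p}$, invoke \Cref{prop:closedForm} to identify $x$ as the $\Omega$-dual with vanishing light cone mean curvature, and sort the three cases by the causal character of $\mathfrak{p}$ via Example \ref{exp:LightlikeCurvature}. The sign bookkeeping $(\mathfrak{p},\mathfrak{p})=-\mu$ and the non-degeneracy caveat $\mathfrak{p}\not\perp G_i$ are both handled exactly as in the text.
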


\subsection{Discrete surfaces in quadrics in \texorpdfstring{$\R^{3,1}$}{R31}}\label{sec:Quadric}
For the discrete isothermic surface $G$ given by \eqref{eq:phiLift}, we defined its family of flat connections as
\begin{align*}
	\Gamma(t)_{ij} := \Gamma^{G_i}_{G_j}\left(1-\tfrac{t}{m_{ij}}\right).
\end{align*}
For fixed $m\in\R^G$ (see Subsection \ref{sec:Calapso}), because $\Gamma(m)$ is flat, there (locally) exist parallel sections $x:\Z^2 \to \R^{3,1}$, that is, discrete surfaces such that on each edge $(ij)$
\begin{align*}
	x_j = \Gamma(m)_{ji}x_i.
\end{align*}
For such a surface $(x_i,x_i) = (x_j,x_j)$ because $\Gamma(m)_{ij} \in O(3,1)$, so $(x,x)=const.$ and $x$ takes values in a quadric in $\R^{3,1}$. 

If $x_i \perp G_i$, then one has that $dx_{ij}, dx_{il}\in G_i$ (see \eqref{eq:Gamma}) and the image of the quadrilateral $(ijkl)$ under $x$ degenerates. Away from such points, we may choose $g^n \in \Gamma G$ so that $(g^n, x) =-1$. We will now show that $x$ and $g^n$ are edge parallel and $A(x,g^n)=0$: 

Firstly, 
\begin{align*} 
	(x_{i},g^n_{j}) = (\Gamma(m)_{ij}x_{j},g^n_{j}) = (x_{j},\Gamma(m)_{ji} g^n_{j}) = -\left(1-\tfrac{m}{m_{ij}}\right),
\end{align*}
implying, by \eqref{eq:Gamma}, that we have that 
\begin{align*}
	x_{j} = \Gamma(m)_{ji} x_{i} = x_{i} - \tfrac{m}{m_{ij}(g^n_{i},g^n_{j})}dg^n_{ij}, 
\end{align*}
thus, $dx_{ij} = \tfrac{m}{m_{ij}(g^n_{i}, g^n_{j})}dg^n_{ij}$. This tells us that $x$ is edge-parallel to $g^n$ and that we can write ${\zeta_{ij} = \frac{1}{m} g^n_{ij}\wedge dx_{ij}}$. This implies $A(x,g^n)=0$ and by Proposition \ref{prop:closedForm}, $L:=x+G$ is a discrete $L$-isothermic surface.

Assume $x$ takes values in a quadric $Q_\mu$ of constant curvature, given by $\mu:=(\p, \p)\neq 0$ (see example (\ref{exp:CMC}) in \ref{exp:LightlikeCurvature}). A Gauss map of $x$ is determined by $g^n$ via 
\begin{align*}
	n = \frac{1}{\sqrt{|\mu|}}\left(x + \operatorname{sgn}(\mu)\sqrt{|\mu|} g^n\right),
\end{align*}
and $x$ is a \emph{discrete constant mean curvature (cmc) surface} in the respective space form:
\begin{itemize}
	\item If $\mu<0$, $x$ is cmc $\frac{1}{\sqrt{-\mu}}$ in the hyperbolic space $Q_\mu$. 
	\item If $\mu>0$, $x$ is cmc $\frac{1}{\sqrt{\mu}}$ in de Sitter space $Q_\mu$. 
\end{itemize} 

If $\mu=0$, $x$ is intrinsically flat in the light cone (see example \ref{exp:CMC}.\ref{exp:Lightcone}). The representation we obtain for these discrete surfaces is thus a discrete analogue of the one given in \cite[Sect. 4.2]{pember2020}. 

Conversely, let $x$ be cmc (intrinsically flat) in the quadric $Q_\mu$ (the light cone $\L$) and let $g^n \in \Gamma G$ be a lift of its hyperbolic Gauss map such that $(x,g^n) = -1$. Then $x$ and $g^n$ are edge-parallel and $A(x,g) =0$. As Proposition \ref{prop:closedForm} states the $1$-form $\zeta_{ij} = \tfrac{1}{m}g_{ij}\wedge dx_{ij}$ is closed and by Lemma \ref{lem:MoutardLiftClosed1Form} we have
\begin{align*}
  \frac{1}{m} g_{ij}\wedge dx_{ij} =  \frac{1}{m_{ij}(g_i, g_j)} g_{ij}\wedge dg_{ij},
\end{align*}
which implies $dx_{ij} = \tfrac{m}{m_{ij}(g_i, g_j)}dg_{ij}$. This can be used to show that $(x_i, g_j) = -(1-\tfrac{m}{m_{ij}})$ and subsequently
\begin{align*}
	\Gamma(m)_{ji}x_i = x_j,
\end{align*}
that is, $x$ is a parallel section of $\Gamma(m)$. This proves the following. 

\begin{prop}\label{prop:DarbouxOfGaussMap}
	A discrete surface $x$ in a quadric $Q_\mu$ (the light cone $\L$) is cmc $\tfrac{1}{\sqrt{|\mu|}}$ (intrinsically flat) if and only if it is a parallel section of $\Gamma(m)$ for some $m\in \R^G$, defined by its lightlike Gauss map. 
\end{prop}

\begin{bem}\label{rem:CopiesOfH}
	For $\mu<0$, $Q_\mu$ consists of two copies of hyperbolic space $\H^3$. A net $x$ takes values in $\H^3$ if $(e_0,x)<-1$, that is if its Hermitian representative $X$ satisfies $\operatorname{tr}X >2$. Given an initial point $x_0 \in \H^3$, a discrete cmc $1$ surface in $Q_{-1}$ is formed by applying the hyperbolic rotations $\Gamma(m)$.  We learn: \emph{Every discrete cmc 1 surface in $\H^3$ is constructed by the application of hyperbolic rotations to a seed point.}
\end{bem}

As we have seen in Example \ref{exp:Gamma}, $\Gamma(m)$ takes the following form in the Hermitian model:
\begin{align}\label{eq:GammaRep}
	B(m)_{ij} =\tfrac{1}{(\phi_i - \phi_j)\sqrt{m_{ij}}\sqrt{m_{ij}-m}}\begin{pmatrix}
		m_{ij}(\phi_i-\phi_j) - m\phi_i &m\phi_i\phi_j \\
		-m &m_{ij}(\phi_i - \phi_j)+m\phi_j
	\end{pmatrix}.
\end{align}
Define a map $F(m):\Z^2 \to SL(2,\C)$ by
\begin{align*}
	F(m)_j = F(m)_iB(m)_{ij}
\end{align*}
Thus, $F(m)$ is the Hermitian representative of $T(m)$, as defined via \eqref{eq:Ttrafo}.

Since $x$ is parallel with respect to $\Gamma(m)$, there exists a fixed vector $\p \in \R^{3,1}$ such that
\begin{align*}
	x = T^{-1}(m)\p.
\end{align*}
Now, take 
\begin{align*}
	\p = \tfrac{1}{2}(1+\mu)e_3 + \tfrac{1}{2}(1-\mu)e_0,
\end{align*}
and $x$ is given in the Hermitian model by
\begin{align*}
	X = F(m)^{-1}\begin{pmatrix}
		1 &0 \cr 0 &-\mu
	\end{pmatrix} \left(F(m)^{-1}\right)^\ast.
\end{align*}
 Denoting $\Phi = F(m)^{-1}$, we obtain the following Weierstrass-type representation 
\begin{align}\label{eq:WeierHyp}
	X= \Phi \begin{pmatrix}
		1 &0 \cr 0 &-\mu
	\end{pmatrix} \Phi^\ast,~\textrm{for}~\Phi_j=B(m)_{ij}^{-1}\Phi_i.
\end{align}
Together with Proposition \ref{prop:DarbouxOfGaussMap} this proves the following theorem. 

\begin{satz} \label{thm:CMCRep}
  Let $\phi:\Z^2 \to \C$ be a discrete holomorphic function. Then the discrete surface represented by $X$ in \eqref{eq:WeierHyp} has constant discrete mean curvature $H = \tfrac{1}{\sqrt{|\mu|}}$ (is intrinsically flat) in the quadric $Q_\mu$ (the light cone $\L$ if $\mu = 0$). Every discrete cmc $\tfrac{1}{\sqrt{|\mu|}}$ (intrinsically flat) surface in $Q_\mu$ ($\L$) can locally be constructed this way. 
  
  Moreover, these Weierstrass-type representations can be viewed as certain applications of the $\Omega$-dual transformation to the prescribed discrete lightlike Gauss map
  \begin{align*}
    G=\spn{\begin{pmatrix}
      |\phi|^2 &\phi \cr \overline{\phi} &1
    \end{pmatrix}}.
  \end{align*}
\end{satz}

In Figure \ref{fig:cmc1H3pair} there are two examples of discrete cmc 1 surfaces in $\H^3$, obtained via \eqref{eq:WeierHyp}, pictured in the Poincar\'e ball model. 

\begin{figure}%
	\centering
	\subfigure[][A discrete cmc $1$ surface in hyperbolic space, obtained from the discrete power function \ref{fig:holoPower} via \eqref{eq:WeierHyp}]{%
		\label{fig:cmc1H3power}
		\includegraphics[width=0.4\textwidth]{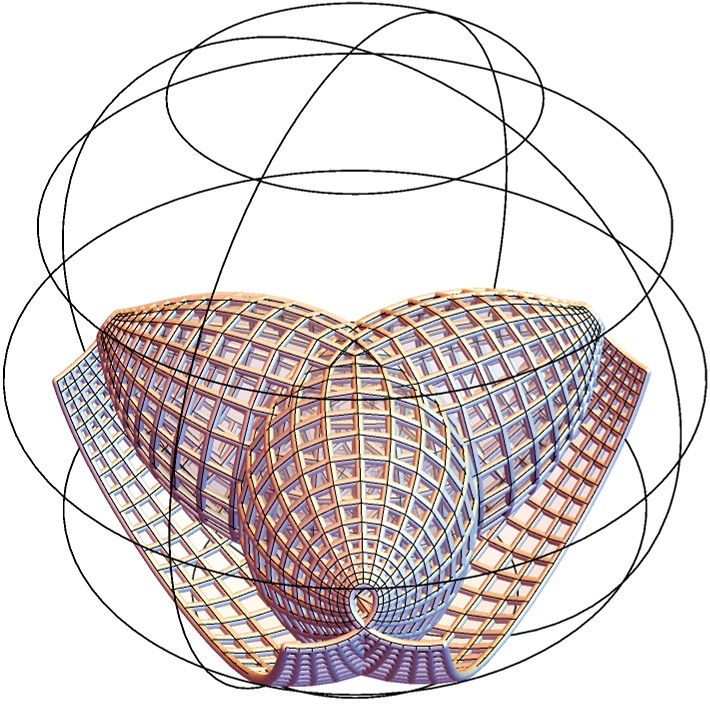}%
	}%
  \hspace{1cm}
		\subfigure[][A discrete cmc $1$ surface of revolution in hyperbolic space, obtained from the discrete exponential map via \eqref{eq:WeierHyp} with a particular choice of initial condition]{%
		\label{fig:cmc1H3rev}%
		\includegraphics[width=0.4\textwidth]{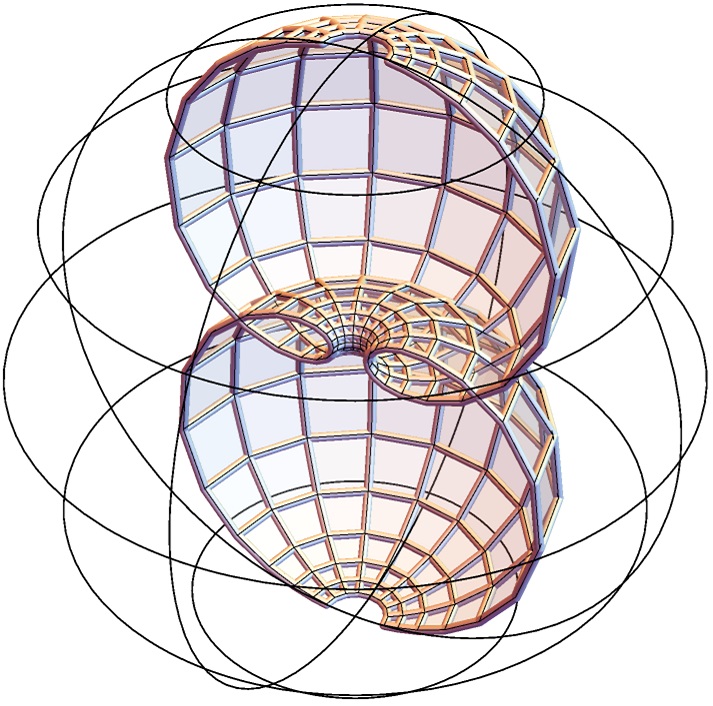}%
	}%
  \caption{Two examples of discrete cmc $1$ surfaces in $\H^3$ shown in the Poincar\'e ball model}%
	\label{fig:cmc1H3pair}%
\end{figure}

In the cases of discrete cmc 1 surfaces in $\H^3$ our construction coincides with the M\"{o}bius geometric construction of \cite[Section 5]{bobenko2014}, where it was also noted that these nets in $\H^3$ are obtained as Darboux transforms of their hyperbolic Gauss maps. However, this Weierstrass-type representation does not coincide with the one given in \cite{hoffmann2012}. Using the Lawson correspondence, we will bridge the gap between the two representations in the next subsection.

\subsection{The Lawson correspondence}\label{sec:Lawson}
Let $x=T(m)^{-1}\p$ be a discrete surface of vanishing light cone mean curvature, constructed as a parallel section of the flat connection $\Gamma(m)$ of its discrete lightlike Gauss map as in the last section. Using \eqref{eq:CalapsoX} the Calapso transform $x(m)$ then satisfies
\begin{align*}
	dx(m)_{ij} &= \frac{m}{(g_i,g_j)m_{ij}}\left((g(m)_i,\p)g(m)_j - (g(m)_j,\p)g(m)_i)\right) \\
	&= -m\zeta(m)_{ij}\p.
\end{align*} 
Hence, $x(m)$ is of vanishing mean curvature in a hyperplane according to Subsection \ref{sec:Hyperplane}. We thus have proved the following Laguerre geometric interpretation of the Lawson correspondence noted in \cite[Lem 4.2]{hertrich-jeromin2000} and \cite[Sect 5.5]{burstall2014} which was generalized in \cite[Sec 4.2]{burstall2018}.

\begin{prop}\label{prop:Lawson}
The Calapso transformation of discrete L-isothermic surfaces perturbs
  \begin{itemize}
    \item discrete cmc $1$ surfaces in $\H^3$ and discrete minimal surfaces in $\R^3$, 
    \item discrete cmc $1$ surfaces in $\S^{2,1}$ and discrete maximal surfaces in $\R^{2,1}$ and
    \item discrete intrinsically flat surfaces in the light cone $\L$ and discrete i-minimal surfaces in isotropic $3$-space.
  \end{itemize}
\end{prop}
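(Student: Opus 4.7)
The plan is to show that the proposition follows at once from the identity $dx(m)_{ij} = -m\zeta(m)_{ij}\p$ derived in the paragraph immediately preceding the statement, once this is combined with the Weierstrass-type constructions of Subsections \ref{sec:Hyperplane} and \ref{sec:Quadric}. The starting observation is that both families of discrete L-isothermic surfaces in question are built from the same seed vector $\p = \tfrac{1}{2}(1+\mu)e_3 + \tfrac{1}{2}(1-\mu)e_0$ (with $(\p,\p)=\mu$), but via dual routes: in Subsection \ref{sec:Quadric} as a parallel section $x = T(m)^{-1}\p$ of the flat connection $\Gamma(m)$, and in Subsection \ref{sec:Hyperplane} as a Weierstrass integral of $-\zeta\p$. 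The Calapso transform at parameter $m$ should convert the former into (a rescaled version of) the latter.

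I would start from $x = T(m)^{-1}\p$, so that $T(m)_i x_i \equiv \p$ kills the first two terms of the Calapso formula \eqref{eq:CalapsoX}. Using orthogonality of $T(m)_i$ to rewrite $(g_i,x_i) = (g(m)_i,\p)$ leaves
\begin{align*}
dx(m)_{ij} = \frac{m}{m_{ij}(g_i,g_j)}\bigl[(g(m)_i,\p)\,g(m)_j - (g(m)_j,\p)\,g(m)_i\bigr].
\end{align*}
Expanding $(g(m)_{ij}\wedge dg(m)_{ij})\p$ under the identification of $\Lambda^2\R^{3,1}$ with $\mathfrak{o}(3,1)$ yields exactly $(g(m)_j,\p)g(m)_i - (g(m)_i,\p)g(m)_j$, and the bookkeeping identity $m_{ij}(m)(g(m)_i,g(m)_j) = m_{ij}(g_i,g_j)$, which follows from $m_{ij}(m) = m_{ij}-m$ together with the scaling relation $(g(m)_i,g(m)_j) = (g_i,g_j)\tfrac{m_{ij}}{m_{ij}-m}$ recorded before \Cref{lem:Additive}, converts the prefactor into $-m/(m_{ij}(m)(g(m)_i,g(m)_j))$. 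This identifies $dx(m)_{ij}$ with $-m\,\zeta(m)_{ij}\p$ as claimed.

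From here the conclusion is immediate: the right-hand side is, up to the overall constant $-m$ (which only rescales the initial condition and does not affect the geometry), the $1$-form integrated in Subsection \ref{sec:Hyperplane} applied to the Calapso-transformed discrete holomorphic data underlying $G(m)$. By \Cref{thm:HyperplaneReps}, $x(m)$ therefore lies in the affine hyperplane with normal $\p$ and has vanishing light cone mean curvature. Reading off the three cases from the causal character of $\p$ completes the correspondence: $\mu < 0$ gives $\p$ timelike, matching cmc $1$ in $\H^3$ with minimal in $\R^3$; $\mu > 0$ gives $\p$ spacelike, matching cmc $1$ in $\S^{2,1}$ with maximal in $\R^{2,1}$; and $\mu = 0$ gives $\p$ lightlike, matching intrinsically flat in $\L$ with i-minimal in isotropic $3$-space. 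The only genuine obstacle is the wedge-product manipulation in the middle paragraph together with the bookkeeping needed to reconcile the opposite sign conventions for the parameter $\mu$ between Subsections \ref{sec:Hyperplane} and \ref{sec:Quadric}; tracking the causal type of $\p$ directly, rather than the symbol $\mu$, sidesteps this confusion.
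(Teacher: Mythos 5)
Your proposal is correct and follows essentially the same route as the paper: the paper's proof is precisely the computation that $x=T(m)^{-1}\p$ makes the first two terms of \eqref{eq:CalapsoX} cancel, leaving $dx(m)_{ij}=\tfrac{m}{m_{ij}(g_i,g_j)}\bigl((g(m)_i,\p)g(m)_j-(g(m)_j,\p)g(m)_i\bigr)=-m\,\zeta(m)_{ij}\p$, so that $x(m)$ is a vanishing-light-cone-mean-curvature surface in the hyperplane $\spn{\p}^\perp$ as in Subsection \ref{sec:Hyperplane}. You merely spell out the wedge-product expansion and the identity $m_{ij}(m)(g(m)_i,g(m)_j)=m_{ij}(g_i,g_j)$ that the paper leaves implicit, and your remark about tracking the causal type of $\p$ rather than the (inconsistently signed) parameter $\mu$ is a fair reading of how the three cases are matched.
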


We will use this to bridge the gap between the representation \eqref{eq:WeierHyp} for discrete cmc $1$ surfaces in $\H^3$ and the one given in \cite{hoffmann2012}: 

For $x:\Z^2\to \H^3$ with hyperbolic Gauss map $G$, we call $\widehat{G}=T(m) \cdot G$ the \emph{discrete secondary Gau{ss} map of $x$}. As $\widehat{G}$ is conformally equivalent to $G$, it is isothermic and for a suitable holomorphic map $\psi: \Sigma \to \C$ (obtained via the inverse of \eqref{eq:StereoProj}) we can write 
\begin{align*}
	\widehat{G} = \spn{ \begin{pmatrix}
		|\psi|^2 &\psi \\ \overline{\psi} &1
	\end{pmatrix}}.
\end{align*} 

By Proposition \ref{lem:Additive}, we have $T(m)^{-1} = T^m(-m)$. Hence, the discrete cmc surface $x$ in $\H^3$ satisfies
\begin{align}\label{eq:PreRep2}
	x = T^{-1}(m)\p = T^{m}(-m)\p, 
\end{align}
with $\p = e_0$. Denoting the Hermitian representative of $T^m(-m)$ by $\Psi$ we obtain the following representation
\begin{align}\label{eq:Rep2}
	X=\Psi \Psi^{\ast},~\textrm{with}~\Psi_j = \Psi_i B^m(-m)_{ij},
\end{align}
where $B^m(-m)$ denotes the Hermitian representation of $\Gamma^m(-m)$ as given in \eqref{eq:GammaRep}. 

This corresponds to the representation given in \cite{hoffmann2012}: There, a discrete cmc $1$ surface in $\H^3$ is given via
\begin{align*}
	Y=\frac{1}{\det E}EE^\ast, 
\end{align*}
where $E$ satisfies
\begin{align*}
	E_j - E_i = E_i \begin{pmatrix}
		\psi_i &-\psi_i\psi_j \\ 1 &-\psi_j
	\end{pmatrix}\frac{\lambda}{m_{ij}(\psi_j-\psi_i)},
\end{align*}
that is
\begin{align*}
	E_j = E_i \begin{pmatrix}
		m_{ij}(\psi_i-\psi_j)-\lambda\psi_i &\lambda\psi_i\psi_j \\
		-\lambda &m_{ij}(\psi_i-\psi_j) + \lambda\psi_j
	\end{pmatrix}\frac{1}{m_{ij}(\psi_i-\psi_j)}
\end{align*}
It is readily seen that
\begin{align*}
	\det E_j = \frac{m_{ij}-\lambda}{m_{ij}}\det E_i,
\end{align*}
and by setting $\Psi = \frac{1}{\sqrt{\det E}}E$ and $\lambda = -m$, we recover \eqref{eq:Rep2}. 

\begin{figure}%
	\centering
	\subfigure[][The discrete power function $z\mapsto z^{2/3}$ is the hyperbolic Gauss map of the surface shown in \ref{fig:hypGmap1Cmc1H3} and the secondary Gauss map of the surface shown in \ref{fig:holo1Cmc1H3}]{%
		\label{fig:holo1all}
		\includegraphics[width=0.4\textwidth]{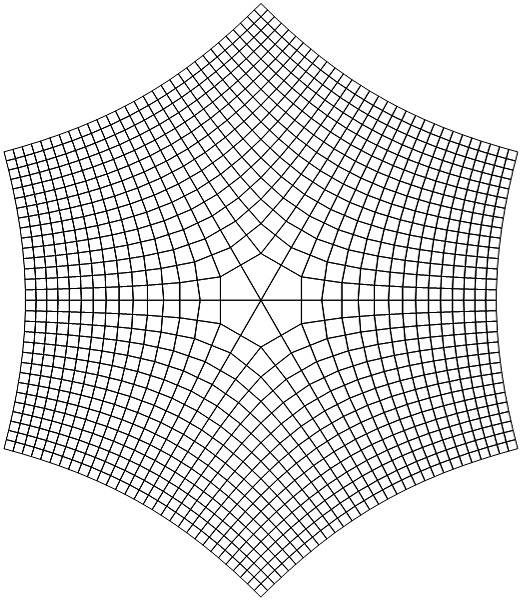}%
	}%
  \hspace{1cm}
  	\subfigure[][The discrete cmc $1$ surface obtained (via \eqref{eq:Rep2}) from the discrete power function \ref{fig:holo1all} which is also its secondary Gauss map. Its holomorphic Gauss map is shown in \ref{fig:hypGmap1all}.]{%
		\label{fig:holo1Cmc1H3}
		\includegraphics[width=0.4\textwidth]{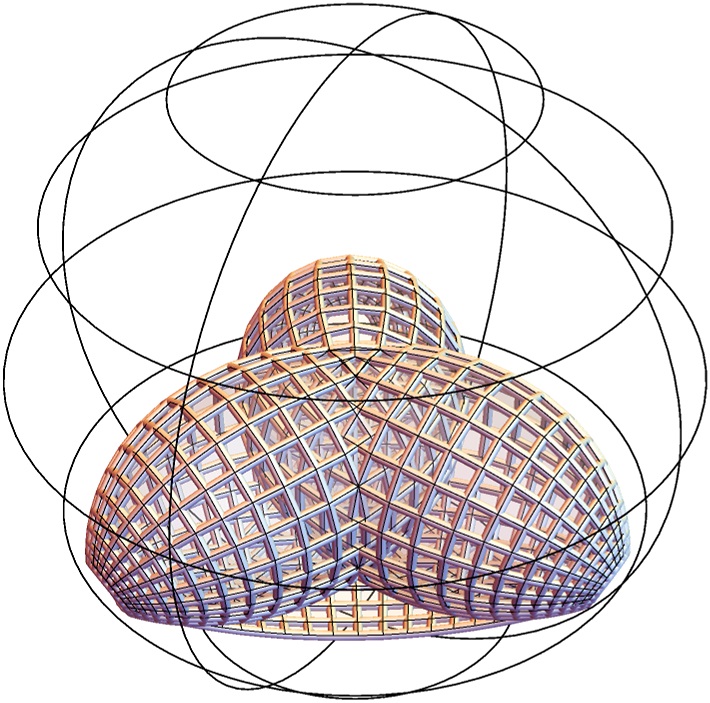}%
	}%
  \\
		\subfigure[][The hyperboic Gauss map of the surface shown in \ref{fig:holo1Cmc1H3} is used as data to obtain the surface in \ref{fig:hypGmap1Cmc1H3}]{%
		\label{fig:hypGmap1all}%
		\includegraphics[width=0.4\textwidth]{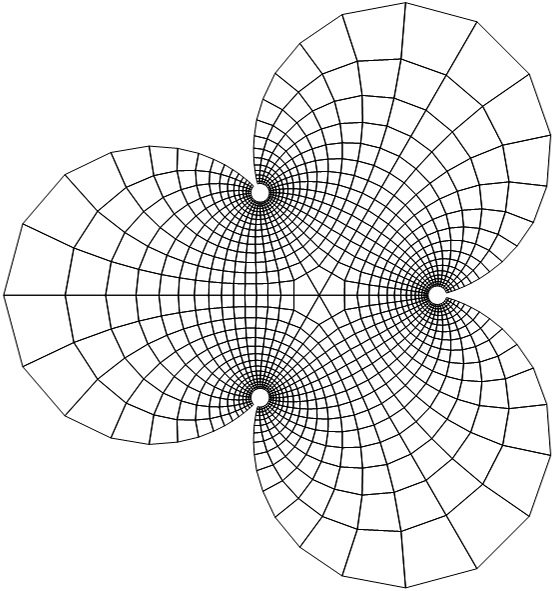}%
	}%
  \hspace{1cm}
		\subfigure[][The dual surface of the one shown in \ref{fig:holo1Cmc1H3}, obtained from the discrete holomorphic map in \ref{fig:hypGmap1all} via \eqref{eq:Rep2}.]{%
		\label{fig:hypGmap1Cmc1H3}%
		\includegraphics[width=0.4\textwidth]{Pics/others/hypGmap1cmc1H3.jpg}%
	}%
  \caption{A discrete cmc $1$ surface in $\H^3$  and its dual surface in the Poincar\'e ball model. The discrete holomorphic functions can be regarded of the hyperbolic and secondary Gauss maps of the surfaces.}%
	\label{fig:dualSurfs}%
\end{figure}

\begin{bem}\label{rem:Dual}
	The hyperbolic Gauss map $G$ of $x$ is given by $G=T^m(-m)\cdot \widehat{G}$. Hence $G$ is the projection of the discrete function
	\begin{align}\label{eq:DualHolo}
		\phi := \frac{a\psi + b}{c\psi + d},~\textrm{where } \Psi =\begin{pmatrix}
        a &b \cr c &d
      \end{pmatrix} 
	\end{align}
	Then, we can directly show that $E_i^{-1}$ satisfies
	\begin{align*}
		E_j^{-1} - E_i^{-1} = E_i^{-1} \begin{pmatrix}
			\phi_i &-\phi_i\phi_j \cr 1 &-\phi_j
		\end{pmatrix}\frac{-\lambda}{(m_{ij}-\lambda)(\phi_j - \phi_i)}
	\end{align*}
	which is the discrete analogue of the duality formula described in \cite[Prop 4]{umehara1997}. Then, $E^{-1}$ defines the \emph{discrete dual cmc 1 surface} of $x$ via its Hermitian representative
	\begin{align*}
		X^\# = (\operatorname{det}E)E^{-1} \left(E^{-1}\right)^\ast = \Psi^{-1} \left(\Psi^{-1}\right)^\ast.
	\end{align*}
	Since $\Psi^{-1}_j = B(m)_{ij} \Psi^{-1}_i$, $X^\#$ is obtained via \eqref{eq:WeierHyp}. The hyperbolic Gauss map of $X^\#$ is given by $\widehat{G}$. In Figure \ref{fig:dualSurfs} we see a pair of dual cmc 1 surfaces in $\H^3$.
\end{bem}

Note that representations like \eqref{eq:Rep2} for other space forms are easily obtained by applying \eqref{eq:PreRep2} to ${\p = \tfrac{1}{2}(1+\mu)e_3 + \tfrac{1}{2}(1-\mu)e_0}$ for different $\mu$. For $\mu=0$ this yields a Weierstrass representation for discrete intrinsically flat surfaces in the light cone $\L$. Because of the duality with \eqref{eq:WeierHyp} mentioned in Remark \ref{rem:Dual}, we learn from Theorem \ref{thm:CMCRep} that every discrete surface with constant mean curvature $\frac{1}{\sqrt{|\mu|}}$ in the quadric $Q_\mu$ and every intrinsically flat surface in $\L$ can be constructed in this way.

\begin{satz}\label{thm:QuadricReps}
  Let $\psi: \Z^2 \to \C$ be a discrete holomorphic function. Then the discrete surface represented by 
  \begin{align*}
    X= \Psi \begin{pmatrix}
      1 &0 \cr 0 &-\mu
    \end{pmatrix} \Psi^\ast,~\textrm{with}~\Psi_j = \Psi_i B^m(-m)_{ij},
  \end{align*}
  has constant discrete mean curvature $H = \tfrac{1}{\sqrt{|\mu|}}$ (is intrinsically flat) in the quadric $Q_\mu$ (the light cone $\L$ if $\mu = 0$). Every discrete cmc $\tfrac{1}{\sqrt{|\mu|}}$ (intrinsically flat) surface in $Q_\mu$ ($\L$) can locally be constructed this way. 
  
  Moreover, these Weierstrass-type representations can be viewed as certain applications of the $\Omega$-dual transformation to the prescribed discrete lightlike Gauss map
  \begin{align*}
    G=\spn{\begin{pmatrix}
      |\phi|^2 &\phi \cr \overline{\phi} &1
    \end{pmatrix}},
  \end{align*}
  where $\phi$ is given by \eqref{eq:DualHolo}.
\end{satz}

\subsection{Discrete linear Weingarten surfaces of Bryant and Bianchi type}\label{sec:Bryant}
In this subsection we employ the Lawson correspondence described in Subsection \ref{sec:Lawson} to recover the Weierstrass representation of discrete linear Weingarten surfaces of Bryant and Bianchi type (abbreviated as BrLW and BiLW, respectively) given in \cite{yasumoto2018} (see \cite{burstall2018} for a geometric interpretation in the realm of Lie sphere geometry). Thus, let $G=\spn{g}$ be a discrete isothermic surface in the projectified light cone with flat connection $\Gamma(m)$ and define $T(m)$ as in \eqref{eq:Ttrafo}. Choose a vector $\p$ with $(\p,\p) = \mu$. Then $x^M=T^{-1}(m)\p$ is a discrete cmc $\tfrac{1}{\sqrt{|\mu|}}$ surface in $Q_\mu$ (for $\mu\neq 0$) or intrinsically flat in the light cone (for $\mu = 0$). Now define for an arbitrary lift $g\in \Gamma G$
\begin{align*}
	x^\pm:=x^M - \frac{\mu\pm 1}{2(g, x^M)}g.
\end{align*}
Clearly, $L=x^\pm + G = x^M + G$ is discrete L-isothermic and since 
\begin{align*}
	(x^\pm, x^\pm) = \mp 1,
\end{align*}
$x^\pm$ takes values in hyperbolic space $\H^3$ or de Sitter space $\S^{2,1}$. $G$ is then a discrete hyperbolic Gauss map of $x^{\pm}$ and induces the Gauss maps $n^\pm$ of $x^\pm$ via
\begin{align*}
  n^\pm=\mp \tfrac{g}{(g,x^\pm)} - x^\pm.
\end{align*}

\begin{lem}\label{lem:Bryant}
	The discrete surface $x^\pm$ is BrLW/BiLW in hyperbolic/de Sitter space with respect to the Gauss map induced by $G$, that is,
	\begin{align}
		(\mu \pm 1)K - 2\mu H + (\mu \mp 1)=0,
	\end{align}
	where $H$ and $K$ denote the discrete mean and discrete extrinsic Gauss curvature with respect to $n^\pm$, as defined in \eqref{eq:MeanCurvature} and \eqref{eq:GaussCurvature} respectively. \\
	Conversely, for every discrete BrLW/BiLW surface $x^\pm$, the discrete surface defined by 
	\begin{align*}
		x^M = x^\pm + \frac{\mu\pm 1}{2(g, x^\pm)}g
	\end{align*}
	is cmc $\tfrac{1}{\sqrt{|\mu|}}$ in $Q_\mu$ (for $\mu \neq 0$) or intrinsically flat in the light cone (for $\mu =0$).
\end{lem}

\begin{proof}
	It is straightforward to see that
	\begin{align*}
		A(x^M, g) &= (g,x^\pm) A\left(-\frac{\mu \pm 1}{2}n^\pm - \frac{\mu \mp 1}{2}x^\pm, -n^\pm-x^\pm\right)\\
		&=\frac{A(x^\pm,x^\pm)(g,x^\pm)}{2}\left\{
			(\mu \pm 1)K-2\mu H +(\mu \mp 1)
		\right\},
	\end{align*}
	proving that $x^\pm$ is BrLW/BiLW if and only if $x^M$ has vanishing light cone mean curvature. This concludes the proof.
\end{proof}

\begin{bem}\label{rem:BryantBiachniDual}
	The two maps $x^\pm$ are orthogonal, hence, form a symmetric pair of discrete surface and Gau{ss} map. Lemma \ref{lem:Bryant} therefore proves that the Gau{ss} map of a discrete BrLW surfacee is BiLW (and vice versa). Of course, $n$ is only defined up to its sign, which we chose in alignment with \cite{yasumoto2018}. Note that
	\begin{align*}
		n^+ = -x^-, 
	\end{align*}
	that is, for each discrete BrLW surface $x^+$ in $\H^3$, the Gauss map $n^+$ and the discrete surface $x^-$ are antipodal discrete BiLW surfaces in $\S^{2,1}$.
\end{bem}

\begin{bsp}
In the case that $\mu = 0$, we have that $x^{\pm}$ are discrete flat fronts and $x^{M}$ is a map into the light cone. In \cite{hertrich-jeromin2021a} it is shown that discrete flat fronts in $\mathbb{H}^{3}$ are the orthogonal nets of the cyclic congruence associated to a Darboux pair of maps into the two sphere. We recover this Darboux pair as $(G,\hat{G})$, where $G$ is the discrete lightlike Gauss map of $x^+$ and $\hat{G}:=\spn{x^{M}}$. 
\end{bsp}

In the Hermitian model, $x^\pm$ corresponds to 
\begin{align*}
	X^\pm = X^M - \frac{\mu \pm 1}{2(g, x^M)}g  = \Psi\cdot \left(\p - \frac{\mu\pm 1}{2(g, x^M)}\Psi^{-1}\cdot g\right),
\end{align*}
where $X^M$ denotes the Hermitian representative of $x^M$. The discrete secondary Gauss map of $x^\pm$, namely ${\widehat{G}=\Psi^{-1}\cdot G = \spn{\hat{g}}}$, is obtained via inverse stereographic projection from a discrete holomorphic map ${\psi:\Z^2 \to \C}$ as
\begin{align*}
	\widehat{g} = 2 \begin{pmatrix}
		|\psi|^2 &\psi \\ \overline{\psi} &1
	\end{pmatrix}.
\end{align*}
If we choose 
\begin{align*}
\p = \tfrac{1}{2}(1-\mu)e_0 + \tfrac{1}{2}(1+\mu)e_3 =\begin{pmatrix}
	1 &0 \cr 0 &-\mu
\end{pmatrix},
\end{align*}
we have 
\begin{align*}
	(g, x^M) = (\Psi\cdot \hat{g}, \Psi\p) = (\hat{g}, \p) = \mu|\psi|^2 - 1,
\end{align*}
and thus
\begin{align}\label{eq:BryantRep}
	\begin{split}
	X^\pm &= \Psi\cdot C^\pm \\ \textrm{with}\quad 
	C^\pm &= \begin{pmatrix}
		1 &0 \\ 0 &-\mu
	\end{pmatrix} + \frac{\mu\pm 1}{1-\mu|\psi|^2} \begin{pmatrix}
		|\psi|^2 &\psi \\ \overline{\psi} &1
	\end{pmatrix} = \frac{1}{1-\mu|\psi|^2} \begin{pmatrix}
		1\pm |\psi|^2 &(\mu\pm 1)\psi \\ (\mu \pm 1)\overline{\psi} &\pm 1 + \mu^2|\psi|^2
	\end{pmatrix}.
	\end{split}
\end{align}

\begin{satz}\label{thm:AllBryant}
  Let $\psi: \Z^2 \to \C$ be a discrete holomorphic function. Then the discrete surfaces represented by \eqref{eq:BryantRep} are a pair of discrete BrLW/BiLW surfaces in hyperbolic/de Sitter space. 
  
  Every pair of discrete BrLW/BiLW surfaces can be constructed in this way.
\end{satz}

\begin{proof}
	The only thing left to prove is that every discrete BrLW/BiLW surface arises via the Weierstrass-type representation \eqref{eq:BryantRep}. Since this is just an algebraic transformation of \eqref{eq:Rep2}, Lemma \ref{lem:Bryant} and Theorem \ref{thm:QuadricReps} show that it produces all discrete BrLW/BiLW surfaces.
\end{proof}

We see this representation in Figure \ref{fig:BryantBianchiSurfs} where discrete holomorphic functions are used to obtain pairs of discrete BrLW/BiLW surfaces in hyperbolic and de Sitter space. 

\begin{figure}%
	\centering
	\subfigure[][A discrete cmc $1$ surface in $\H^3$ (left, see \ref{fig:cmc1H3power}) and a discrete surface of constant harmonic mean curvature $1$ in $\S^{2,1}$ (right)]{%
		\label{fig:CMC1H3HMC1S21}
		\includegraphics[width=0.4\textwidth]{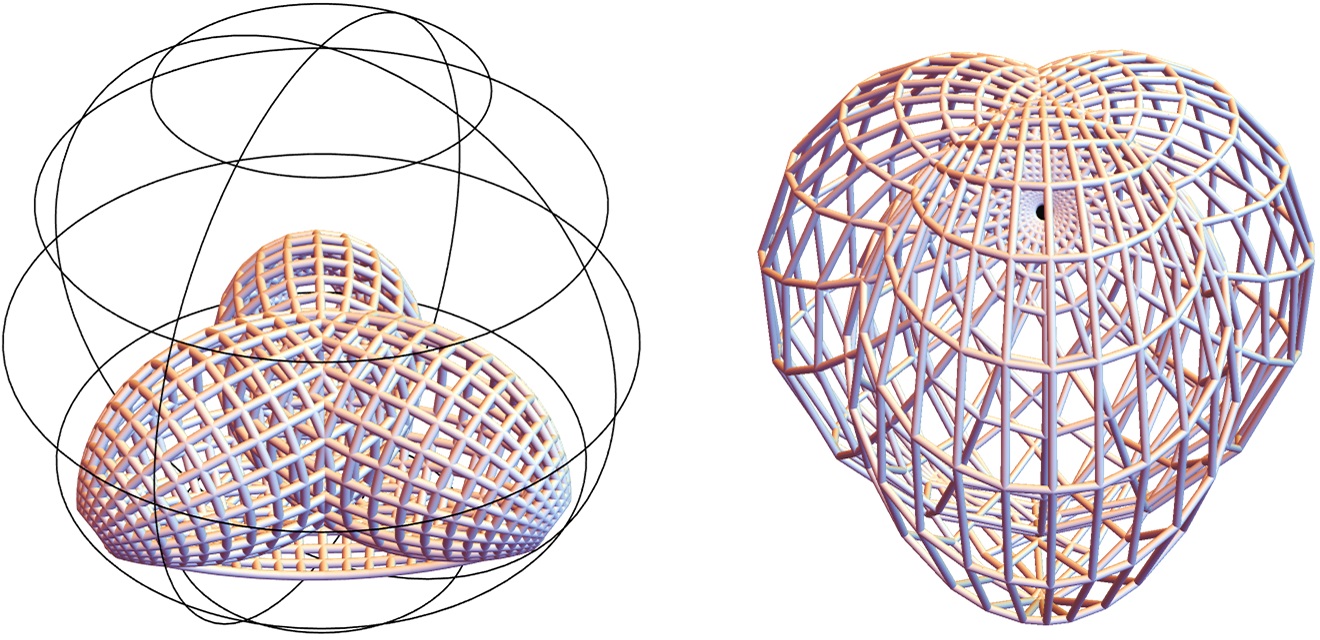}%
  }%
  \hspace{1cm}
  \subfigure[][A discrete cmc $1$ surface in $\S^{2,1}$ (right) and a discrete surface of constant harmonic mean curvature $1$ in $\H^3$ (left)]{%
		\label{fig:HMC1H3CMC1S21}
		\includegraphics[width=0.4\textwidth]{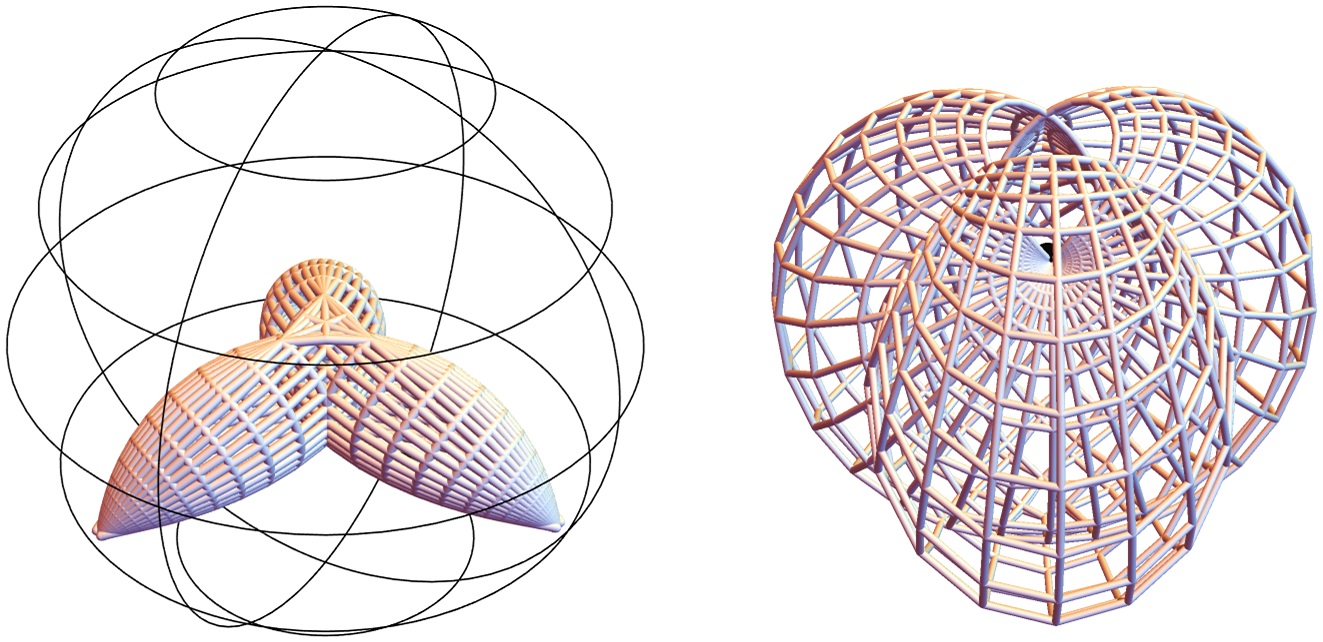}%
	}%
  \\
  	\subfigure[][Two discrete intrinsically flat surfaces in $\H^3$ (left) and $\S^{2,1}$ (right)]{%
		\label{fig:flatH3flatS21}
		\includegraphics[width=0.4\textwidth]{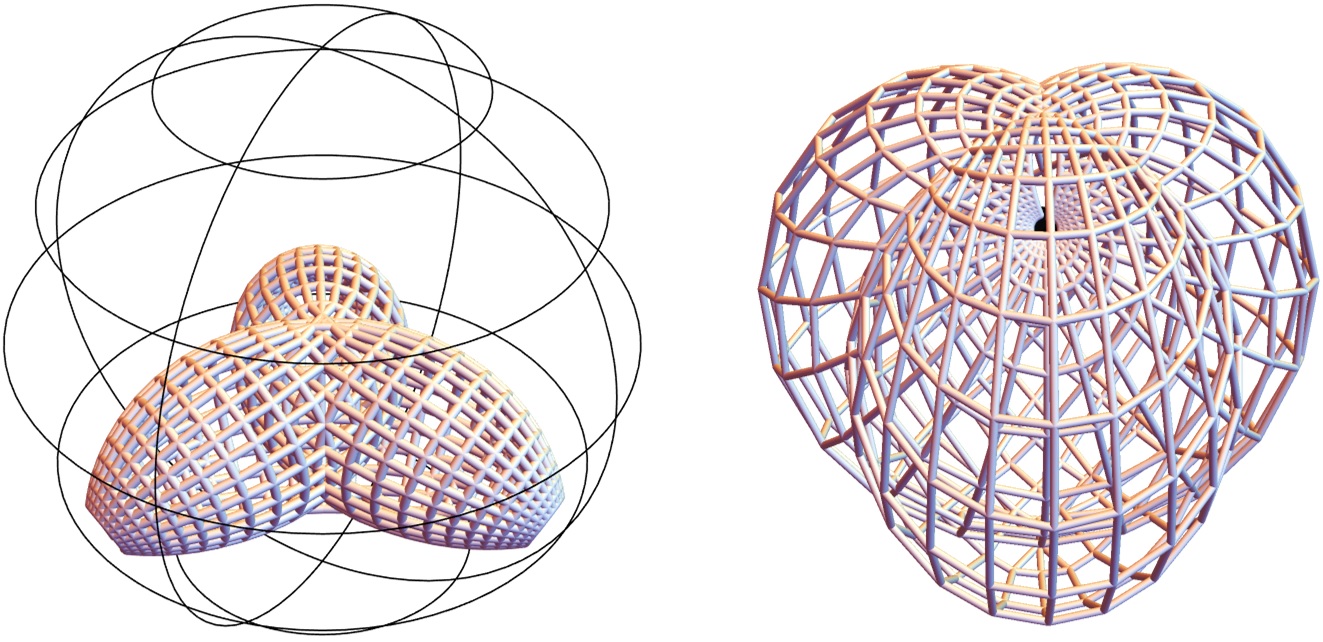}%
	}%
  \caption{Examples of discrete BrLW/BiLW surfaces in $\H^3$ or $\S^{2,1}$ which form a pair of discrete surface and hyperbolic Gauss map (see Remark \ref{rem:BryantBiachniDual}). All examples are created using the representation \eqref{eq:BryantRep} with the discrete power function as data. We show surfaces in hyperbolic space in the Poincar\'e ball model and surfaces in de Sitter space in the hollow ball model.}%
	\label{fig:BryantBianchiSurfs}%
\end{figure}

To see that we have recovered the representation of linear Weingarten surfaces given in \cite[Prop 5.1]{yasumoto2018}, recall the Weierstrass representation formulas for discrete BrLW and BiLW surfaces, as given there: Choose $t$ and $\lambda$ so that
\begin{align*}
	\mathcal{T}_i:=1+t|\psi_i|^2 \neq 0,\quad\textrm{and}\quad 1-\frac{\lambda}{m_{ij}}\neq 0.
\end{align*}
We define $E: \Z^2 \to SL(2,\C)$ by integration of
\begin{align*}
	E_i^{-1}E_j=\frac{1}{\sqrt{1-{\lambda}{m_{ij}}}} \begin{pmatrix}
		1 &\psi_j-\psi_i \\ \frac{\lambda}{m_{ij}(\psi_j-\psi_i)} &1 
	\end{pmatrix},
\end{align*}
and another matrix-valued map 
\begin{align*}
	L_i=\begin{pmatrix}
		0 &\sqrt{\mathcal{T}_i} \\ -\frac{1}{\sqrt{\mathcal{T}_i}} &-\frac{t\overline{\psi}_i}{\sqrt{\mathcal{T}_i}} 
	\end{pmatrix}.
\end{align*}
Then, we may define two discrete surfaces $(f^+, f^-): \Z^2 \to \H^3 \times \S^{2,1}$ by
\begin{align*}
	f_i^\pm = \operatorname{sgn}(\mathcal{T}_i)E_iL_i \begin{pmatrix}
		1 &0 \\ 0 &\pm 1
	\end{pmatrix}\left( E_iL_i \right)^\ast,
\end{align*}
which are BrLW and BiLW respectively.

The map $\widetilde{\Psi}$, defined by
\begin{align*}
	\widetilde{\Psi} = E \begin{pmatrix}
		1 &-\psi \\0 &1
	\end{pmatrix},
\end{align*}
satisfies
\begin{align*}
	\widetilde{\Psi}_i^{-1}\widetilde{\Psi}_j =\begin{pmatrix}
		1 &\psi_i \cr 0 &1
	\end{pmatrix}\left( E_i^{-1}E_j\right) \begin{pmatrix}
		1 &-\psi_j \cr 0 &1
	\end{pmatrix} = B^m(\lambda)_{ij}
\end{align*}
which is the system \eqref{eq:Rep2} for $\lambda = -m$. Hence, under the right initial conditions, $\Psi = \widetilde{\Psi}$ and the discrete surfaces $f^\pm$ are given as

\begin{align*}
	f^\pm=\operatorname{sgn}(\mathcal{T})\Psi \cdot \left[
		\begin{pmatrix}
		1 &\psi \\0 &1
	\end{pmatrix}L
		\right] \begin{pmatrix}
		1 &0 \\0 &\pm 1
	\end{pmatrix}\left[
		\begin{pmatrix}
		1 &\psi \\0 &1
	\end{pmatrix}L
		\right]^\ast.
\end{align*}
We compute
\begin{align*}
	\left[
		\begin{pmatrix}
		1 &\psi \cr 0 &1
	\end{pmatrix}L
		\right] \begin{pmatrix}
		1 &0 \cr 0 &\pm 1
	\end{pmatrix}\left[
		\begin{pmatrix}
		1 &\psi \cr 0 &1
	\end{pmatrix}L
		\right]^\ast =\frac{1}{|\mathcal{T}|} \begin{pmatrix}
		|\psi|^2\pm 1 &\psi(1 \mp t) \cr \overline{\psi}(1 \mp t) &1\pm t^2|\psi|^2
	\end{pmatrix},
\end{align*}
whereby
\begin{align*}
	f^\pm = \Psi \cdot \frac{1}{1+t|\psi|^2}\begin{pmatrix}
			|\psi|^2\pm 1 &\psi(1\mp t) \\ \overline{\psi}(1\mp t) &1\pm t^2|\psi|^2
		\end{pmatrix},
\end{align*}
which coincides with \eqref{eq:BryantRep} under $t=-\mu$ up to the sign ambiguity mentioned in Remark \ref{rem:BryantBiachniDual}.

We summarize this result in the following theorem:
\begin{satz}\label{thm:RepsBianchiBryant}
	The Weierstrass-type representations for discrete BrLW surfaces in hyperbolic space $\H^3$ and discrete BiLW surfaces in de Sitter space $\S^{2,1}$ \cite[Prop 5.1]{yasumoto2018} can be viewed as certain applications of the $\Omega$-dual transformation to a prescribed discrete lightlike Gauss map.
\end{satz}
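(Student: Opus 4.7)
The plan is to combine the Laguerre-geometric construction from \Cref{lem:Bryant} with an explicit translation into the Hermitian model, and then to match the resulting formula to the representation of \cite{yasumoto2018} by a direct matrix-theoretic identification. Starting from a discrete holomorphic $\psi:\Z^2\to\C$, I would use inverse stereographic projection as in \eqref{eq:phiLift} to obtain the Moutard lift of a discrete isothermic surface $\widehat{G}=\spn{\widehat{g}}$ in $\mathbb{P}\L$, with cross-ratio factorising edge-labelling $m$. Running the quadric construction of \Cref{sec:Quadric} with spectral parameter $m$ and $\p=\tfrac{1}{2}(1-\mu)e_0+\tfrac{1}{2}(1+\mu)e_3$ yields $x^M=T^{-1}(m)\,\p$, whose envelope $L=x^M+G$ is L-isothermic and is by construction the $\Omega$-dual of its own lightlike Gauss map. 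Modifying $x^M$ along $G$ via $x^\pm=x^M-\tfrac{\mu\pm 1}{2(g,x^M)}g$ leaves $L$ unchanged as a Legendre immersion but, by \Cref{lem:Bryant}, places the new section in $\H^3$ or $\S^{2,1}$ as a BrLW/BiLW surface; thus $L$ exhibits this surface as the $\Omega$-dual of the prescribed lightlike Gauss map $G$.

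The next step is to transfer the construction to the Hermitian model in order to extract an explicit Weierstrass formula. Writing $\Psi$ for the Hermitian representative of $T(m)^{-1}$, the same computation used for \eqref{eq:WeierHyp}, together with the evaluation $(g,x^M)=(\widehat{g},\p)=\mu|\psi|^2-1$ (which follows from $g=\Psi\cdot\widehat{g}$ and $\Psi\in SL(2,\C)$), yields the compact representation $X^\pm=\Psi\cdot C^\pm$ with the explicit $2\times 2$ matrix $C^\pm$ appearing in \eqref{eq:BryantRep}.

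It remains to identify this with the representation in \cite[Prop 5.1]{yasumoto2018}. Under the parameter correspondence $\lambda=-m$ and $t=-\mu$, I would check directly that the shifted frame $\widetilde{\Psi}:=E\cdot U_{\psi}$, where $U_\psi$ is the upper-triangular matrix with unit diagonal and upper-right entry $-\psi$, satisfies the same multiplicative recursion $\widetilde{\Psi}_j=\widetilde{\Psi}_i\,B(m)_{ij}^{-1}$ as $\Psi$ does by \eqref{eq:GammaRep}, so that $\Psi=\widetilde{\Psi}$ after matching initial frames. Substituting this into Yasumoto's formula $f^\pm=\operatorname{sgn}(\mathcal{T})\,EL\,\mathrm{diag}(1,\pm 1)(EL)^\ast$ and simplifying the resulting Hermitian sandwich conjugated by $U_\psi^{-1}L$ then recovers $C^\pm$, up to the sign ambiguity noted in \Cref{rem:BryantBiachniDual}.

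The main obstacle I expect is precisely this final matrix identification: verifying that Yasumoto's system for $E$, once conjugated by $U_\psi$, genuinely coincides with the recursion $B(m)^{-1}$ from \eqref{eq:GammaRep}, and that the normalising square-root factors in both systems align. The algebra is routine but delicate, since both systems are only defined up to initial conditions and up to a sign choice of $SL(2,\C)$ lift, and one must track these data carefully in order to obtain the desired equality (rather than mere projective equivalence) of the two discrete surfaces.
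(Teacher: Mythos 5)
Your proposal follows essentially the same route as the paper: build $x^M=T(m)^{-1}\p$ via the quadric construction, shift along $G$ to get the BrLW/BiLW sections $x^\pm$ using \Cref{lem:Bryant}, pass to the Hermitian model to obtain \eqref{eq:BryantRep}, and identify this with the representation of \cite{yasumoto2018} by gauging $E$ with the unit upper-triangular matrix carrying $-\psi$ under the correspondence $\lambda=-m$, $t=-\mu$. The only slip is notational: the recursion satisfied by $\widetilde{\Psi}$ is $\widetilde{\Psi}_j=\widetilde{\Psi}_i B^m(-m)_{ij}$, the Hermitian connection of the secondary Gauss map built from $\psi$ as in \eqref{eq:GammaRep}, rather than right-multiplication by $B(m)_{ij}^{-1}$ (the latter acts on the left in \eqref{eq:WeierHyp}); these agree as trivialisations of $T(m)^{-1}$ once initial conditions are matched, which is exactly the bookkeeping you flag.
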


\section*{Acknowledgements} 
The authors would like to thank Joseph Cho for fruitful discussions during an impromptu stay in Kobe that sparked many ideas in this paper. We also express our gratitude to Udo Hertrich-Jeromin for his input which added significant results.
Furthermore, we gratefully acknowledge financial support from the FWF research project P28427-N35 "Non-rigidity and symmetry breaking" and the JSPS/FWF Joint Project I3809-N32 "Geometric shape generation". The first author was also supported by the GNSAGA of INdAM and the MIUR grant "Dipartimenti di Eccellenza" 2018--2022, CUP: E11G18000350001, DISMA, Politecnico di Torino and gratefully acknowledges support from the JSPS Grant-in-Aid for JSPS fellows 19J10679. The third author was partly supported by JSPS KAKENHI Grant Numbers JP18H04489, JP19J02034, JP20K14314, JP20H01801, JP20K03585, JST CREST Grant Number JPMJCR1911, and Osaka City University Advanced Mathematical Institute (MEXT Joint Usage/Research Center on Mathematics and Theoretical Physics JPMXP0619217849).

\bibliographystyle{plain}
\bibliography{DWfinal}

\begin{thebibliography}{10}

\bibitem{blaschke1929}
W.~Blaschke.
\newblock {\em {Vorlesungen {\"u}ber {D}ifferentialgeometrie und geometrische
  {G}rundlagen von {E}insteins {R}elativit\"{a}tstheorie III}}.
\newblock {Springer Berlin Heidelberg}, 1929.

\bibitem{bobenko2014}
A.~I. Bobenko, U.~{Hertrich-Jeromin}, and I.~Lukyanenko.
\newblock Discrete constant mean curvature nets in space forms: {{Steiner}}'s
  formula and {{Christoffel}} duality.
\newblock {\em Discrete Comput. Geom.}, 52(4):612--629, 2014.

\bibitem{bobenko1996}
A.~I. Bobenko and U.~Pinkall.
\newblock Discrete isothermic surfaces.
\newblock {\em J. Reine Angew. Math.}, 1996(475):187--208, 1996.

\bibitem{bobenko2010}
A.~I. Bobenko, H.~Pottmann, and J.~Wallner.
\newblock A curvature theory for discrete surfaces based on mesh parallelity.
\newblock {\em Math. Ann.}, 348(1):1--24, 2010.

\bibitem{bobenko2007}
A.~I. Bobenko and Y.~B. Suris.
\newblock Isothermic surfaces in sphere geometries as {{Moutard}} nets.
\newblock {\em Proc. R. Soc. A}, 463(2088):3171--3193, 2007.

\bibitem{bobenko2008}
A.~I. Bobenko and Y.~B. Suris.
\newblock {\em Discrete Differential Geometry: Integrable Structure}.
\newblock Number v. 98 in Graduate Studies in Mathematics. {American
  Mathematical Society}, {Providence, R.I}, 2008.

\bibitem{bobenko2006}
Alexander~I Bobenko, Tim Hoffmann, and Boris~A Springborn.
\newblock Minimal surfaces from circle patterns: Geometry from combinatorics.
\newblock {\em Annals of Mathematics}, pages 231--264, 2006.

\bibitem{bryant1987}
R.~Bryant.
\newblock Surfaces of mean curvature one in hyperbolic space.
\newblock {\em Ast\'erisque}, 154-155:321--347, 1987.

\bibitem{burstall2020}
F.~E. Burstall, J.~Cho, U.~Hertrich-Jeromin, M.~Pember, and W.~Rossman.
\newblock Discrete {$\Omega$}-nets and {Guichard} nets.
\newblock {\em arXiv:2008.01447 [math]}, 2020.

\bibitem{burstall2018}
F.~E. Burstall, U.~{Hertrich-Jeromin}, and W.~Rossman.
\newblock Discrete linear {{Weingarten}} surfaces.
\newblock {\em Nagoya Math. J.}, 231:55--88, 2018.

\bibitem{burstall2014}
F.~E. Burstall, U.~{Hertrich-Jeromin}, W.~Rossman, and S.~Santos.
\newblock Discrete surfaces of constant mean curvature.
\newblock {\em RIMS Kyokuroko}, 1880:133--179, 2014.

\bibitem{demoulin1911}
A.~Demoulin.
\newblock Sur les surfaces {$\Omega$}.
\newblock {\em C. R. Acad. Sci., Paris}, 153:927–929, 1911.

\bibitem{demoulin1911a}
A.~Demoulin.
\newblock Sur les surfaces {$R$} et les surfaces {$\Omega$}.
\newblock {\em C. R. Acad. Sci., Paris}, 153:590–593, 705--707, 1911.

\bibitem{dubois2021}
J.~Dubois, U.~{Hertrich-Jeromin}, and G.~Szewieczek.
\newblock Notes on flat fronts in hyperbolic space.
\newblock {\em Journal of Geometry}, 113(1):1--13, 2022.

\bibitem{cecil2008}
T.~E.Cecil.
\newblock {\em Lie Sphere Geometry: With Applications to Submanifolds}.
\newblock Universitext. {Springer}, {New York}, 2nd ed edition, 2008.

\bibitem{hertrich-jeromin2000}
U.~{Hertrich-Jeromin}.
\newblock Transformations of discrete isothermic nets and discrete cmc-1
  surfaces in hyperbolic space.
\newblock {\em manuscripta math.}, 102(4):465--486, 2000.

\bibitem{hertrich-jeromin2021a}
U.~{Hertrich-Jeromin} and G.~Szewieczek.
\newblock Discrete cyclic systems and circle congruences.
\newblock {\em Annali di Matematica Pura ed Applicata (1923 -)}, pages 1--28,
  2022.

\bibitem{hoffmann2012}
T.~Hoffmann, W.~Rossman, T.~Sasaki, and M.~Yoshida.
\newblock Discrete flat surfaces and linear {{Weingarten}} surfaces in
  hyperbolic 3-space.
\newblock {\em Trans. Amer. Math. Soc.}, 364(11):5605--5644, 2012.

\bibitem{honda2015}
A.~Honda. and S.~Izumiya.
\newblock The lightlike geometry of marginally trapped surfaces in
  {{Minkowski}} space-time.
\newblock {\em J. Geom.}, 106(1):185--210, 2015.

\bibitem{izumiya2009}
S.~Izumiya.
\newblock Legendrian dualities and spacelike hypersurfaces in the lightcone.
\newblock {\em Moscow Math. J.}, 9(2):325--357, 2009.

\bibitem{lam2020}
W.~Y. Lam.
\newblock Cmc-1 surfaces via osculating m\"{o}bius transformations between
  circle patterns.
\newblock {\em arXiv:2007.04253 [math]}, 2020.

\bibitem{liu2007}
H.~L. Liu.
\newblock Surfaces in the lightlike cone.
\newblock {\em J. Math. Anal. Appl.}, 325(2):1171--1181, 2007.

\bibitem{mueller2014}
C.~{M}\"{u}ller.
\newblock On discrete constant mean curvature surfaces.
\newblock {\em Discrete Comput. Geom.}, 51(3):516--538, 2014.

\bibitem{pember2020}
M.~Pember.
\newblock Weierstrass-type representations.
\newblock {\em Geom. Dedicata}, 204(1):299--309, 2020.

\bibitem{pottmann2007}
H.~Pottmann and Y.~Liu.
\newblock Discrete {{Surfaces}} in {{Isotropic Geometry}}.
\newblock In {\em Mathematics of {{Surfaces XII}}}, Lecture {{Notes}} in
  {{Computer Science}}, pages 341--363, {Berlin, Heidelberg}, 2007. {Springer}.

\bibitem{sachs1990}
H.~Sachs.
\newblock {\em {Isotrope Geometrie des Raumes}}.
\newblock {Vieweg+Teubner Verlag}, {Wiesbaden}, 1990.

\bibitem{strubecker1942}
K.~Strubecker.
\newblock {Differentialgeometrie des isotropen Raumes. III. Fl\"achentheorie}.
\newblock {\em Math Z.}, 48(1):369--427, 1942.

\bibitem{umehara1992}
M.~Umehara and K.~Yamada.
\newblock A parametrization of the {{Weierstrass}} formulae and perturbation of
  complete minimal surfaces in {{$\R^3$}} into the hyperbolic 3-space.
\newblock {\em J. Reine Angew. Math.}, 1992(432):93--116, 1992.

\bibitem{umehara1997}
M.~Umehara and K.~Yamada.
\newblock A duality on {{CMC}}-1 surfaces in hyperbolic space, and a hyperbolic
  analogue of the {{Osserman Inequality}}.
\newblock {\em Tsukuba J. Math.}, 21(1):229--237, 1997.

\bibitem{weierstrass1866}
K.~T. Weierstrass.
\newblock Untersuchungen \"uber die {{Fl\"achen}}, deren mittlere
  {{Kr\"ummung}} \"uberall gleich {{Null}} ist.
\newblock {\em Monatsbericht Berliner Akademie}, pages 612--625, 1866.

\bibitem{yasumoto2015}
M.~Yasumoto.
\newblock Discrete maximal surfaces with singularities in {{Minkowski}} space.
\newblock {\em Diff. Geom. App.}, 43:130--154, 2015.

\bibitem{yasumoto2018}
M.~Yasumoto and W.~Rossman.
\newblock Discrete linear {{Weingarten}} surfaces with singularities in
  {{Riemannian}} and {{Lorentzian}} spaceforms.
\newblock {\em Adv. Stud. Pure Math.}, 78(Singularities in Generic
  Geometry):383--410, 2018.

\end{thebibliography}

\vspace{12pt}

\begin{minipage}[t][3cm][c]{0.6\columnwidth}
  \textbf{Mason Pember}\\
  Department of Mathematical Sciences \\
  University of Bath \\ 
  Bath NA2 7AY, UK \\
  \url{mason.j.w.pember@bath.edu}
\end{minipage}

\begin{minipage}[t][3cm][c]{0.6\columnwidth}
  \textbf{Denis Polly}\\
  Department of Mathematics \\
  Kobe University \\
  1-1 Rokkodai-cho\\
  Kobe 657-8501, Japan\\
  \url{denis@math.kobe-u.ac.jp}
\end{minipage}

\begin{minipage}[t][3cm][c]{0.6\columnwidth}
  \textbf{Masashi Yasumoto}\\
  Graduate School of Technology, Industrial and Social Sciences\\
  Tokushima University\\
  2-1 Minamijyousanjima-cho \\
  Tokushima 770-8506, Japan \\
  \url{yasumoto.masashi@tokushima-u.ac.jp}
\end{minipage}

\end{document}